\documentclass{amsart}
\usepackage{amsmath, amsthm, amscd, amsfonts, amssymb, color}

%\linespread{2}
%
%
% THEOREM Environments (Examples)-----------------------------------------
%

\usepackage{amsfonts}
\usepackage{amsmath,amscd}
\usepackage{amssymb}
\usepackage{amsthm}
\usepackage{newlfont}
\newcommand{\f}{\frac}

\long\def\alert#1{\parindent2em\smallskip\hbox to\hsize%
{\hskip\parindent\vrule%
\vbox{\advance\hsize-2\parindent\hrule\smallskip\parindent.4\parindent%
\narrower\noindent#1\smallskip\hrule}\vrule\hfill}\smallskip\parindent0pt}
 \newtheorem{thm}{Theorem}[section]
 \newtheorem{cor}[thm]{Corollary}
 \newtheorem{lem}[thm]{Lemma}
 \newtheorem{prop}[thm]{Proposition}
 \theoremstyle{definition}
 \newtheorem{defn}[thm]{Definition}
 \theoremstyle{remark}

 \numberwithin{equation}{section}
 %\numberwithin{example}{section}

\begin{document}
%\begin{document}

%
\title[The structure of  generalized Heisenberg Lie algebras]
{The structure, capability and the Schur multiplier of  generalized Heisenberg Lie algebras}

\author[F. Johari]{Farangis Johari}
\address{Department of Pure Mathematics\\
Ferdowsi University of Mashhad, Mashhad, Iran}
\email{farangis.johari@mail.um.ac.ir, farangisjohary@yahoo.com}

\author[P.Niroomand]{Peyman Niroomand}
\address{School of Mathematics and Computer Science\\
Damghan University, Damghan, Iran}
\email{niroomand@du.ac.ir, p$\_$niroomand@yahoo.com}

\thanks{\textit{Mathematics Subject Classification 2010.} Primary 17B30; Secondary 17B05, 17B99.}

\keywords{Schur multiplier, nilpotent Lie algebra, capability, generalized Heisenberg}

\date{\today}

%\dedicatory{}
\begin{abstract}
From [Problem 1729, Groups of prime power order, Vol. 3], Berkovich et al. asked to obtain the Schur multiplier and the representation of a group $G$,  when $G$ is a special $p$-group minimally generated by $d$ elements and $|G'|=p^{\f{1}{2}d(d-1)}$.
Since there are analogies between groups and Lie
algebras, we intend to give an answer to this question similarly for nilpotent Lie algebras. Furthermore, we give some results about the tensor square and the Schur multiplier of some nilpotent Lie algebras  of class two.

 \end{abstract}

%%% ----------------------------------------------------------------------
\maketitle
%%% ----------------------------------------------------------------------
%\begin{document}

%\end{document}
\section{  Motivation and Preliminaries}
The Schur multiplier of groups is appeared in the works of Schur in $1904.$ There are many results on the Schur multiplier of finite $p$-groups and the reader can see for instance \cite{2b2,9,11}. According to \cite{ba,ba1,hardy,ni,ni1}, we may define the Schur multiplier, $\mathcal{M}(L)$,  for a Lie algebra $L$. Many papers in the literature make an attempt to generalize the results on finite $p$-groups to the theory of Lie algebras. Although there are some sporadic results  for the Lie algebra that does not coincide with the results for groups. However, there are analogies between groups and Lie
algebras, but the analogies  are not completely identical and  most of them should be checked carefully.
Recently in \cite{rai}, Rai tried to give an answer to \cite[Problem 1729]{berk}. The motivation of this paper is not only  to answer this question for Lie algebra but we also determine which one of these Lie algebras are capable (a Lie algebra $L$ is called capable if and only if $L\cong E/Z(E)$ for a Lie algebra $E$). By $d(L)$ we denote the minimal number of elements required to generate a Lie algebra $L.$ Roughly speaking, for a generalized Heisenberg  Lie algebra $H,$ $H^2=Z(H),$ such that $d(H)=d$  and $\dim H^2=\f{1}{2}d(d-1),$ we intend to describe the structure of $H$ and its Schur multiplier and we also specify when $H$ is capable. 
%A generalized Heisenberg  Lie algebra with the one-dimensional is Heisenberg  Lie algebra  derived subalgebra $H(m).$ In fact, a two-generator generalized Heisenberg  Lie algebra $ L $ $\dim Z(L)=1$ is isomorphic to $H(1).$  Moneyhum \cite{mon} showed that if $L$ is a Lie algebra with the central factor Lie
%algebra $L/Z(L)$ of finite dimension $n,$ then $\dim L^2= \f{1}{2}n(n-1). $  
%
In this paper, we are going to consider the  generalized Heisenberg  Lie algebra $H$ with the maximum  dimension of the derived subalgebra, that means, if  $d(H)=d,  $ then $\dim H^2=\f{1}{2}d(d-1).  $
Our technique depends on the results that recently obtained in \cite{ni4} and completely is different from \cite{rai}.
Furthermore, we give the structure of Schur
multiplier and tensor square of  Lie algebras of class two with the maximum  dimension of the derived subalgebra as an application and we show such Lie algebras are capable. 
Throughout the paper, $H(m)$ and $A(n)$ are used to denote the Heisenberg and abelian Lie algebra of dimension $2m+1$ and $n,$ respectively.  For the convenience of the reader, we give some results and definitions in the following.

\begin{lem}\cite[Lemma 2.6]{ni3}\label{2}
We have
\begin{itemize}
\item[$(i)$]$\dim \mathcal{M}(A(n))=\dfrac{1}{2}n(n-1).$
\item[$(ii)$]$\dim \mathcal{M}(H(1))=2.$
\item[$(iii)$]$\dim \mathcal{M}(H(m))=2m^2-m-1$ for all $ m\geq 2.$
\end{itemize}
\end{lem}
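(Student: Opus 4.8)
The plan is to compute all three multipliers from the Hopf-type formula for Lie algebras: if $L = F/R$ with $F$ free, then $\mathcal{M}(L) \cong (R \cap F^2)/[R,F]$. In each case $F$ is a free Lie algebra on finitely many generators, so it carries the natural weight grading $F = \bigoplus_{k\geq 1} F_k$ with $F^j = \bigoplus_{k \geq j} F_k$, and because all defining relations will be homogeneous the subspaces $R$ and $[R,F]$ are graded. Thus $\mathcal{M}(L)$ inherits a grading, and I would compute it weight by weight, using Witt's formula $\dim F_k = \frac{1}{k}\sum_{e \mid k}\mu(e)\,N^{k/e}$ (equivalently, counts of basic commutators) to read off dimensions.

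For $(i)$, the abelian algebra is $A(n) = F/F^2$ with $F$ free on $n$ generators, so $R = F^2$ and $\mathcal{M}(A(n)) = F^2/[F^2,F] = F^2/F^3 \cong F_2$, whose dimension is the number of weight-two basic commutators, namely $\binom{n}{2} = \frac{1}{2}n(n-1)$. For $(ii)$, $H(1)$ is exactly the free nilpotent Lie algebra of class two on two generators, $H(1) = F/F^3$; then $R = F^3 \subseteq F^2$ gives $\mathcal{M}(H(1)) = F^3/F^4 \cong F_3$, and Witt's formula with $N=2,\ k=3$ yields $\dim F_3 = \frac{1}{3}(2^3-2) = 2$.

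For $(iii)$, take $F$ free on $x_1,\dots,x_m,y_1,\dots,y_m$ (so $N=2m$) and let $R$ be the ideal with $F/R \cong H(m)$; concretely $F^3 \subseteq R$ and the weight-two part $R_2$ is the kernel of the surjection $F_2 \to H(m)^2 \cong \langle z\rangle$, so $R_2$ is spanned by the $[x_i,x_j]$, the $[y_i,y_j]$, the $[x_i,y_j]$ with $i \neq j$, and the $[x_i,y_i]-[x_1,y_1]$, whence $\dim R_2 = \binom{2m}{2} - 1 = 2m^2 - m - 1$. Since $R \subseteq F^2$ we have $\mathcal{M}(H(m)) = R/[R,F]$, graded. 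In weight two $[R,F]$ vanishes, so $\mathcal{M}_2 = R_2$ contributes $2m^2-m-1$; in weights $\geq 4$ one has $[R,F] \supseteq [F^3,F] = F^4$, so $\mathcal{M}_k = 0$ there. Everything therefore reduces to weight three, where $\mathcal{M}_3 = F_3/[R_2,F_1]$, and the assertion is equivalent to the identity $[R_2,F_1] = F_3$ for $m \geq 2$.

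This last identity is the main obstacle. Since $F_3$ is spanned by left-normed brackets $[[a,b],c]$ with $a,b,c$ generators, the only terms not obviously in $[R_2,F_1]$ are those whose inner factor is a diagonal $[x_i,y_i]$; using $[x_i,y_i] \equiv [x_1,y_1] \pmod{R_2}$ these all reduce modulo $[R_2,F_1]$ to the $2m$ elements $[[x_1,y_1],c]$. I would clear these with the Jacobi identity: for $c \neq x_1,y_1$ a single application rewrites $[[x_1,y_1],c]$ as a sum of brackets whose inner factor (one of $[x_1,x_k]$, $[x_1,y_k]$, etc.) already lies in $R_2$; and for the two stubborn cases $[[x_1,y_1],x_1]$ and $[[x_1,y_1],y_1]$ I would first replace $[x_1,y_1]$ by $[x_2,y_2]$ modulo $R_2$ and only then expand, which succeeds precisely because a second Heisenberg pair is available. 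This is exactly where the hypothesis $m \geq 2$ enters, and it also explains the exceptional value in $(ii)$: when $m=1$ one has $R_2 = 0$, so $\mathcal{M}_2$ vanishes while $\mathcal{M}_3 = F_3$ survives with dimension two.
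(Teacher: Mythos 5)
Your proof is correct, but note that the paper offers no proof of its own to compare it against: Lemma \ref{2} is imported verbatim by citation from \cite[Lemma 2.6]{ni3} and is used as a black box throughout. Judged as a self-contained replacement for that citation, your argument holds up. The Hopf-type formula $\mathcal{M}(F/R)\cong (R\cap F^2)/[R,F]$ is exactly the definition of the multiplier underlying the sources this paper builds on (Batten, Hardy--Stitzinger), your reduction to graded pieces is legitimate because all relators are homogeneous, and the crux of case $(iii)$ --- the identity $[R_2,F_1]=F_3$ for $m\geq 2$ --- is handled correctly: after reducing every inner factor to $[x_1,y_1]$ modulo $R_2$, the only problematic spanning elements are $[[x_1,y_1],x_1]$ and $[[x_1,y_1],y_1]$, and replacing $[x_1,y_1]$ by $[x_2,y_2]$ modulo $R_2$ before applying the Jacobi identity is precisely the step that needs a second Heisenberg pair; this cleanly isolates why $m\geq 2$ is required and why $H(1)$ in $(ii)$ is exceptional (there $R_2=0$, so the weight-three piece $F_3$, of Witt dimension $2$, survives intact). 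Methodologically, your computation is the graded, Hopf-formula version of the Hardy--Stitzinger ``tails'' technique that the paper itself employs in Propositions \ref{h2} and \ref{h3}, with Theorem \ref{13} (basic commutators and the Witt formula) playing the same counting role; your formulation has the added benefit of making visible that $\mathcal{M}(H(m))$ is concentrated in weight two (namely $\cong R_2$, of dimension $2m^2-m-1$) when $m\geq 2$, but in weight three when $m=1$, which explains the case split in the statement rather than merely verifying it.
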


For a Lie algebra $L,$ we use notation $L^{(ab)}$ instead of $L/L^2$.
\begin{thm}\label{1kg}
 \cite[Proposition 3]{ba1}
Let $ A$ and $ B $ be two Lie algebras. Then
\[ \mathcal{M}(A\oplus B)\cong   \mathcal{M}(A) \oplus  \mathcal{M}(B) \oplus (A^{(ab)}\otimes_{mod} B^{(ab)}), \] in where $A^{(ab)}\otimes_{mod} B^{(ab)}$ is the standard tensor product $A$ and $B$.
\end{thm}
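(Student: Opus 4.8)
The plan is to reduce the statement to the Künneth theorem in Lie algebra homology. Recall from the definition of the Schur multiplier that, for a Lie algebra $L$ over a field, $\mathcal{M}(L)$ is naturally identified with the second homology with trivial coefficients, $\mathcal{M}(L)\cong H_2(L)$, while $H_0(L)$ is the ground field (one–dimensional) and $H_1(L)\cong L^{(ab)}$. Thus it suffices to compute $H_2(A\oplus B)$ and to re-express the answer in these terms.

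First I would use the fact that the Chevalley--Eilenberg complex of a direct sum $A\oplus B$ is isomorphic to the tensor product of the Chevalley--Eilenberg complexes of $A$ and of $B$. Working over a field, the Künneth theorem then carries no correction (Tor) terms and yields
\[ H_n(A\oplus B)\cong \bigoplus_{p+q=n} H_p(A)\otimes H_q(B). \]
Specializing to $n=2$ and using $H_0(L)\cong$ (ground field), $H_1(L)\cong L^{(ab)}$ and $H_2(L)\cong \mathcal{M}(L)$, the three summands with $p+q=2$ become $\mathcal{M}(A)$, $A^{(ab)}\otimes B^{(ab)}$ and $\mathcal{M}(B)$, which is exactly the claimed decomposition. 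Since $A^{(ab)}$ and $B^{(ab)}$ are abelian, their module tensor product $\otimes_{mod}$ coincides with the plain vector-space tensor product appearing in the Künneth term.

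A more self-contained alternative stays with the Hopf-type definition $\mathcal{M}(L)=(R\cap F^2)/[F,R]$ for a free presentation $L=F/R$. Here I would take free presentations $A=F_1/R_1$ and $B=F_2/R_2$, form the free product $F_1 * F_2$, and present $A\oplus B$ as $(F_1 * F_2)/R$ with $R=R_1+R_2+[F_1,F_2]$, the cross term $[F_1,F_2]$ encoding that the two factors commute inside the direct sum. One then traces $(R\cap (F_1*F_2)^2)/[F_1*F_2,R]$ through this decomposition, splitting it according to whether a generator is absorbed into $R_1$, into $R_2$, or into the mixed part.

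The main obstacle, on either route, is the cross term: one must show that the mixed contribution is exactly the module tensor product $A^{(ab)}\otimes_{mod} B^{(ab)}$. In the homological approach this is just the identification of the $H_1(A)\otimes H_1(B)$ summand, and is automatic once the Künneth isomorphism is in place. In the presentation approach it requires analyzing how $[F_1,F_2]$ meets $(F_1*F_2)^2$ modulo $[F_1*F_2,R]$; after discarding the brackets internal to $F_1$ or to $F_2$, which are already accounted for by $R_1$ and $R_2$, the surviving pairing collapses to the abelianizations and produces $A^{(ab)}\otimes_{mod} B^{(ab)}$. I expect the homological route to be the cleaner of the two, precisely because over a field the Künneth sequence splits with no Tor terms and the bookkeeping of the cross term is handled for free.
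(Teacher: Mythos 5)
The paper itself contains no proof of this statement: it is quoted directly from \cite[Proposition 3]{ba1} (Batten--Moneyhun--Stitzinger), so there is no internal argument to compare against, and the right comparison is with the proof in that cited source. Your first, homological route is correct and is genuinely different from it: over a field the Chevalley--Eilenberg complex of $A\oplus B$ is the tensor product of the complexes of $A$ and $B$, the K\"unneth theorem has no Tor correction, and $H_0(L)\cong k$, $H_1(L)\cong L^{(ab)}$, $H_2(L)\cong \mathcal{M}(L)$ give exactly the three summands; since everything is over a field, $\otimes_{mod}$ is just the vector-space tensor product. The one ingredient you should make explicit is the identification $\mathcal{M}(L)\cong H_2(L)$: the literature this paper sits in (Batten, Hardy--Stitzinger) \emph{defines} $\mathcal{M}(L)=(R\cap F^2)/[F,R]$ via a free presentation $L=F/R$, so your argument silently invokes the Lie-algebra Hopf formula $H_2(L)\cong (R\cap F^2)/[F,R]$; that is standard over a field, but it is the bridge on which your whole proof rests and it deserves a citation or a proof. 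The proof in \cite{ba1} is essentially your second sketch: free presentations $A=F_1/R_1$, $B=F_2/R_2$, a presentation of $A\oplus B$ on the free product $F_1\ast F_2$ with kernel the ideal generated by $R_1$, $R_2$ and $[F_1,F_2]$, followed by a hands-on splitting of the Hopf quotient, in which the analysis of the cross term $[F_1,F_2]$ is the bulk of the work. Note that your version of that second route is only a sketch --- you acknowledge but do not carry out the cross-term computation --- so had you relied on it alone there would be a genuine gap; as it stands, with the K\"unneth argument as the actual proof, the proposal is sound, and what it buys over the original is precisely that the cross term is handled automatically by the K\"unneth isomorphism rather than by explicit bookkeeping in the free product.
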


From \cite{ellis}, let $L\wedge L$ and $L\otimes L$ denote the exterior square and the tensor square of a Lie algebra $L$, respectively.
The authors assume that the reader is some what familiar with  the exterior square and the tensor square of a Lie algebra. See for instance \cite{ellis,ni5}.\newline Recall from \cite{ni3} that the concept exterior centre of a Lie algbera $L$ is denoted by $Z^{\wedge}(L)$ and it is equal to the set $\{x\in L~|~ x\wedge g=1_{L\wedge L}~\text{for all}~ g\in L \}$. It is well-known that $L$ is capable if and only if $Z^{\wedge}(L)=0$.

The next lemma  shows the kernel of  the commutator map  $ \kappa' $ is exactly the Schur multiplier.
\begin{lem}\cite[Theorem 35 $(iii)$]{ellis}\label{j1}
Let $ L $ be a Lie algebra. Then
$0\rightarrow \mathcal{M}(L)\rightarrow L\wedge L \xrightarrow{\kappa'} L^2\rightarrow 0$ is exact, in where
$\kappa': L\wedge L \rightarrow L^2$ is given by $l\wedge l_1\mapsto [l,l_1].$

\end{lem}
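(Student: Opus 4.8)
The plan is to combine the universal property of the exterior square with a free presentation of $L$, mirroring the classical Hopf-formula argument for groups. First I would verify that $\kappa'$ is a well-defined Lie algebra homomorphism. Recall that $L\wedge L$ is the quotient of the nonabelian tensor square $L\otimes L$ by the submodule generated by the diagonal elements $l\otimes l$. The commutator assignment $l\otimes l_1\mapsto [l,l_1]$ respects the defining relations of $L\otimes L$ precisely because of the bilinearity of the bracket and the Jacobi identity, so it induces a homomorphism $\kappa:L\otimes L\to L^2$. Since $[l,l]=0$, the map $\kappa$ annihilates every diagonal generator and therefore descends to $\kappa':L\wedge L\to L^2$. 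Surjectivity is immediate: $L^2=[L,L]$ is spanned by brackets $[l,l_1]=\kappa'(l\wedge l_1)$, so every element of $L^2$ lies in the image. This already gives exactness at $L^2$; the substance of the statement is exactness at $L\wedge L$, that is, that $\ker\kappa'$ coincides with the intrinsically defined Schur multiplier $\mathcal{M}(L)$.

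To this end I would fix a free presentation $0\to R\to F\to L\to 0$ with $F$ a free Lie algebra, so that $L\cong F/R$. The Schur multiplier is then given by the Hopf-type formula $\mathcal{M}(L)\cong (R\cap F^2)/[R,F]$. Under this presentation one has the natural identifications $L^2\cong (F^2+R)/R\cong F^2/(R\cap F^2)$, while the key structural fact is the isomorphism $L\wedge L\cong F^2/[R,F]$, induced by $l\wedge l_1\mapsto [\tilde l,\tilde l_1]+[R,F]$ for lifts $\tilde l,\tilde l_1\in F$; this is well defined because changing a lift by an element of $R$ alters the bracket only by an element of $[R,F]$. Granting this, the map $\kappa'$ is carried to the canonical projection $F^2/[R,F]\twoheadrightarrow F^2/(R\cap F^2)$, which makes sense since $[R,F]\subseteq R\cap F^2$, and whose kernel is exactly $(R\cap F^2)/[R,F]\cong \mathcal{M}(L)$. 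A short diagram chase then yields the asserted exact sequence.

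The main obstacle is establishing the isomorphism $L\wedge L\cong F^2/[R,F]$ and checking that it is independent of the chosen presentation. This requires showing that the exterior square of a free Lie algebra $F$ reduces to $F^2$ (equivalently, that $\mathcal{M}(F)=0$, which is the case $R=0$ of the Hopf formula), and then analyzing how $\wedge$ transforms along the surjection $F\twoheadrightarrow L$, i.e.\ computing the effect of quotienting by $R$ on the exterior square and verifying that the ideal $[R,F]$ captures exactly the relations introduced. The well-definedness of $\kappa'$ and the surjectivity are routine; all of the work is concentrated in this presentation-level description of $L\wedge L$, after which the kernel computation is immediate.
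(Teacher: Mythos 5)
This lemma is stated in the paper without proof---it is imported verbatim from Ellis \cite{ellis}---so the only meaningful comparison is with the argument in that cited source, and your proposal follows essentially the same route: the free-presentation/Hopf-formula description $\mathcal{M}(L)\cong (R\cap F^2)/[R,F]$ together with the identification $L\wedge L\cong F^2/[R,F]$, under which $\kappa'$ becomes the canonical projection onto $F^2/(R\cap F^2)\cong L^2$ with kernel $\mathcal{M}(L)$. Your reductions are correct, and the two facts you defer (that $F\wedge F\cong F^2$, i.e.\ $\mathcal{M}(F)=0$ for a free Lie algebra $F$, and the determination of the kernel of $F\wedge F\rightarrow L\wedge L$) are precisely the content of the results of Ellis that the citation points to.
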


In the following, we provide a criterion  for detecting the
capability of a Lie algebra $L.$
\begin{lem}\cite[Corollary 4.6]{alam}\label{a} A Lie algebra $L$ is capable if and only if the natural map $\mathcal{M}(L)\rightarrow  \mathcal{M}(L/\langle x\rangle)$ has a non-trivial kernel for all non-zero elements $x \in Z(L).$
\end{lem}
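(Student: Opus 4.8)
The plan is to deduce the equivalence from the criterion recorded earlier that $L$ is capable if and only if its exterior centre $Z^{\wedge}(L)$ is trivial, by identifying the kernel of the natural map $\mathcal{M}(L)\to\mathcal{M}(L/\langle x\rangle)$ with the subspace $x\wedge L$ of $L\wedge L$. Note first that $Z^{\wedge}(L)\subseteq Z(L)$: applying $\kappa'$ to the defining relation $x\wedge g=0$ forces $[x,g]=0$ for all $g$. Hence it will suffice to show that, for a non-zero central element $x$, the natural map has non-trivial kernel precisely when $x\notin Z^{\wedge}(L)$.

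First I would record that for $x\in Z(L)$ the subspace $x\wedge L=\{x\wedge g : g\in L\}$ already lies inside $\mathcal{M}(L)$. Indeed, by Lemma \ref{j1} one has $\kappa'(x\wedge g)=[x,g]=0$ because $x$ is central, so $x\wedge L\subseteq\ker\kappa'=\mathcal{M}(L)$. By the very definition of the exterior centre, the condition $x\wedge L=0$ is equivalent to $x\in Z^{\wedge}(L)$.

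The core step is to compute the kernel of $\mathcal{M}(L)\to\mathcal{M}(\bar L)$, where $\bar L=L/\langle x\rangle$. I would place the two exact sequences of Lemma \ref{j1}, one for $L$ and one for $\bar L$, in a commutative diagram joined by the vertical maps induced by the quotient $\pi\colon L\to\bar L$; compatibility holds because $\pi\wedge\pi$ intertwines the two commutator maps $\kappa'$. Since the left vertical arrow is $\mathcal{M}(L)\to\mathcal{M}(\bar L)$ and $\mathcal{M}(\bar L)$ injects into $\bar L\wedge\bar L$, an element $m\in\mathcal{M}(L)$ dies in $\mathcal{M}(\bar L)$ if and only if it dies in $\bar L\wedge\bar L$, i.e.\ if and only if $m\in\ker(L\wedge L\to\bar L\wedge\bar L)$. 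Identifying this last kernel with $x\wedge L$ then yields $\ker\big(\mathcal{M}(L)\to\mathcal{M}(\bar L)\big)=x\wedge L$.

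Combining the steps, the natural map has non-trivial kernel if and only if $x\wedge L\neq 0$, that is, if and only if $x\notin Z^{\wedge}(L)$. Therefore every non-zero $x\in Z(L)$ gives a non-trivial kernel exactly when no non-zero central element lies in $Z^{\wedge}(L)$, i.e.\ when $Z^{\wedge}(L)=0$ (using $Z^{\wedge}(L)\subseteq Z(L)$), which is precisely capability of $L$. I expect the main obstacle to be the identification $\ker(L\wedge L\to\bar L\wedge\bar L)=x\wedge L$: functoriality delivers surjectivity of $L\wedge L\to\bar L\wedge\bar L$ and the inclusion $x\wedge L\subseteq\ker$ immediately, but verifying that nothing beyond $x\wedge L$ is killed requires a careful analysis of the defining relations of the non-abelian exterior square under the quotient by the central line $\langle x\rangle$.
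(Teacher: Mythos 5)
Your argument is correct, but note that the paper itself gives no proof of this lemma: it is imported as a citation from \cite{alam}, whose own treatment (a paper about free presentations and covers of Lie algebras) does not go through the exterior square. Your route instead makes the lemma an internal consequence of machinery the paper already displays: Lemma \ref{j1} identifies $\mathcal{M}(L)$ with $\ker \kappa'$, centrality of $x$ puts $x\wedge L$ inside $\mathcal{M}(L)$, the definition of $Z^{\wedge}(L)$ says $x\wedge L=0$ exactly when $x\in Z^{\wedge}(L)$, and the quoted criterion that capability is equivalent to $Z^{\wedge}(L)=0$ (together with your observation $Z^{\wedge}(L)\subseteq Z(L)$) closes the equivalence; the quantifier bookkeeping at the end is sound. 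The only step you leave open --- that $\ker\bigl(L\wedge L\to \bar{L}\wedge\bar{L}\bigr)$ is exactly $x\wedge L$ --- does not in fact require a fresh analysis of the defining relations: it is the right-exactness of the exterior square (the Lie-algebra analogue of the Brown--Loday sequence), namely that for any ideal $M$ of $L$ the sequence $M\wedge L\to L\wedge L\to (L/M)\wedge(L/M)\to 0$ is exact, which is available in the paper's own reference \cite{ellis}; for $M=\langle x\rangle$, bilinearity of the exterior pairing identifies the image of $M\wedge L$ in $L\wedge L$ with the subspace $x\wedge L$, which lies in $\mathcal{M}(L)$ by your centrality remark, so the kernel computation is complete. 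With that citation supplied your proof is complete, and it has the merit of exhibiting transparently the equivalence between the two capability criteria the paper quotes (the multiplier-kernel criterion of Lemma \ref{a} and the exterior-centre criterion), which the citation-only treatment leaves hidden.
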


Recall that a Lie algebra $ L $ is called Heisenberg provided that $ L^2=Z(L) $ and $ \dim L^2=1.$ Such algebras are odd dimension and have the following presentation $ H(m)\cong \langle a_1,b_1,\ldots, a_m,b_m,z\big{|}[a_l,b_l]=z,1\leq
l\leq m\rangle.$

\begin{defn}\label{1}
A Lie algebra $ H $ is called generalized Heisenberg of rank $ n $ if $ H^2=Z(H) $ and $ \dim H^2=n $.
\end{defn}
The following  result allows us to work only on the generalized Heisenberg Lie algebras when working on the capability of  Lie algebras of class $2. $

\begin{prop}\cite[Proposition 2.2 ]{ni4} and \cite[Proposition 3.1]{C} \label{811}
Let $ L $ be a finite dimensional nilpotent Lie algebra of nilpotency class $ 2 $. Then $ L=H\oplus A $ and $ Z^{\wedge}(L)=Z^{\wedge}(H)$, where $ A $ is abelian  and $ H $ is a generalized Heisenberg Lie algebra.
\end{prop}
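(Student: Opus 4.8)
The plan is to prove the two assertions in turn: first the vector-space splitting that produces the generalized Heisenberg summand $H$, and then the identification of the exterior centers. Since $L$ has class $2$ we have $L^2\subseteq Z(L)$, and as both are subspaces I would choose a vector-space complement $A$ of $L^2$ in $Z(L)$, so $Z(L)=L^2\oplus A$. Because $A\subseteq Z(L)$ it is a central, hence abelian, ideal. Next I would extend this to $L=L^2\oplus A\oplus W$ and set $H:=L^2\oplus W$, so that $L=H\oplus A$ as vector spaces with $L^2\subseteq H$. Since $[H,A]\subseteq[L,Z(L)]=0$ and $[A,A]=0$, this is a direct sum of Lie algebras. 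To see that $H$ is generalized Heisenberg: $L^2\subseteq H$ gives $[H,H]\subseteq L^2$, while $L^2=[L,L]=[H,H]$ (the $A$-contributions vanishing), so $H^2=L^2$; class $2$ gives $H^2\subseteq Z(H)$; and for the reverse inclusion, if $x\in Z(H)$ then $[x,L]=[x,H]+[x,A]=0$, so $x\in Z(L)=L^2\oplus A$, and writing $x=u+a$ with $u\in L^2\subseteq H$ and $a\in A$ forces $a=x-u\in H\cap A=0$, whence $x=u\in L^2=H^2$. Thus $Z(H)=H^2$ and $H$ is generalized Heisenberg of rank $\dim L^2$.

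For the equality $Z^{\wedge}(L)=Z^{\wedge}(H)$ I would first record that $Z^{\wedge}(L)\subseteq Z(L)$: if $x\wedge g=0$ for all $g$, then applying the map $\kappa'$ of Lemma \ref{j1} yields $[x,g]=\kappa'(x\wedge g)=0$ for all $g$, so $x\in Z(L)=H^2\oplus A$. The heart of the argument is the behaviour of the exterior square under the splitting $L=H\oplus A$, namely the decomposition $L\wedge L\cong(H\wedge H)\oplus(A\wedge A)\oplus(H^{(ab)}\otimes_{mod}A^{(ab)})$, the exterior-square analogue of Theorem \ref{1kg}; it is consistent with that theorem through the dimension count coming from Lemma \ref{j1}, since $A$ is abelian and hence $\mathcal{M}(A)\cong A\wedge A$. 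Under this identification a generator $h\wedge a$ with $h\in H$, $a\in A$ maps to $\bar h\otimes a$ in the cross term, where $\bar h=h+H^2\in H^{(ab)}$.

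Taking $x\in Z^{\wedge}(L)$ and writing $x=u+a_0$ with $u\in H^2$ and $a_0\in A$, for arbitrary $g=h+a$ I would expand $x\wedge g=u\wedge h+u\wedge a+a_0\wedge h+a_0\wedge a$ and read off its three components: $u\wedge a=0$ because its image $\bar u\otimes a$ vanishes (as $u\in H^2$); the $H\wedge H$-component is $u\wedge h$; the cross-term component is $-\bar h\otimes a_0$; and the $A\wedge A$-component is $a_0\wedge a$. Requiring $x\wedge g=0$ for all $g$ forces $u\wedge h=0$ for every $h\in H$, so $u\in Z^{\wedge}(H)$, and $\bar h\otimes a_0=0$ for every $\bar h\in H^{(ab)}$. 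Since $L$ is nilpotent, $H^{(ab)}\neq 0$, so the latter condition forces $a_0=0$; hence $x=u\in Z^{\wedge}(H)$. The reverse inclusion $Z^{\wedge}(H)\subseteq Z^{\wedge}(L)$ is immediate from the same expansion, so $Z^{\wedge}(L)=Z^{\wedge}(H)$.

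I expect the main obstacle to be justifying the direct-sum decomposition of $L\wedge L$ together with the explicit description of its cross term: this is precisely what makes a nonzero $a_0\in A$ detectable through $a_0\wedge h$ and thereby excludes it from the exterior center. Everything else reduces to routine bracket bookkeeping and the elementary fact that in $H^{(ab)}\otimes_{mod}A^{(ab)}$ a simple tensor $\bar h\otimes a_0$ can vanish for all $\bar h$ only when $a_0=0$.
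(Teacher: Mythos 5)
The paper itself does not prove this proposition: it is quoted from \cite[Proposition 2.2]{ni4} and \cite[Proposition 3.1]{C}, so there is no in-paper argument to compare yours against. Judged on its own terms, your proof follows the natural route: split $Z(L)=L^2\oplus A$, extend to $L=H\oplus A$ with $L^2\subseteq H$, check that the sum is one of ideals, and verify $H^2=L^2=Z(H)$. This first half is complete and correct, including the verification $Z(H)=H^2$ and the (needed) observation that exact class $2$ forces $L^2\neq 0$, hence $H\neq 0$ and $H^{(ab)}\neq 0$, which is what later kills the component $a_0$.

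The one genuine gap is the step you yourself flag as the main obstacle: the decomposition $L\wedge L\cong (H\wedge H)\oplus (A\wedge A)\oplus (H^{(ab)}\otimes_{mod}A^{(ab)})$. The statement is true --- it is the exterior-square analogue of Theorem \ref{1kg} and can be extracted from Ellis's results on the nonabelian tensor and exterior square of a direct sum of Lie algebras \cite{ellis} --- but the justification you offer, namely ``consistency'' with Theorem \ref{1kg} through the dimension count supplied by Lemma \ref{j1}, does not prove it. Equality of dimensions only yields an abstract isomorphism of vector spaces, whereas your argument uses much more: that the summand labelled $H\wedge H$ is the image of the canonical map $H\wedge H\to L\wedge L$ induced by inclusion (and that this map is injective), and that the cross term is realized by $h\wedge a\mapsto \bar h\otimes \bar a$. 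Without this naturality, the operation of ``reading off the components'' of $x\wedge g$ --- which is the heart of both inclusions $Z^{\wedge}(L)\subseteq Z^{\wedge}(H)$ and $Z^{\wedge}(H)\subseteq Z^{\wedge}(L)$ --- is not licensed. The gap is fillable (cite the decomposition with its naturality, or prove it directly from the defining presentation of the exterior square using the projections $L\to H$ and $L\to A$), but as written the crucial tool is asserted rather than established.
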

Let $L$ be a free Lie algebra  on the set $X=\{x_1,x_2,\ldots \}$ From \cite{shi}, the  basic commutator on the set $X$ defined inductively.
\begin{itemize}
\item[(i)] The generators $x_1,x_2,\ldots, x_n$ are basic commutators of length one and ordered by setting $x_i < x_j$ if $i < j.$

\item[(ii)] If all the basic commutators $d_i$ of length less than $t$ have been defined and ordered, then we may define the basic commutators of length $t$ to be all commutators of the form $[d_i, d_j]$ such that the sum of lengths of $d_i$ and $d_j$ is $t$, $d_i > d_j$, and if $d_i =[d_s, d_t]$, then $d_j\geq d_t$. The basic commutators of length $t$ follow those of lengths less than $t$. The basic commutators of the same length can be ordered in any way, but usually the lexicographical order is used.
\end{itemize}

   The number of all basic commutators on a set $X=\{x_1,x_2,\ldots x_d\}$ of length $n$ is denoted by $l_d(n)$. Thanks to \cite{m},  we have
   \[l_d(n)=\frac{1}{n}\sum_{m|n}\mu (m)d^{\f{n}{m}},\]
   where $\mu (m)$ is the M\"{o}bius function, defined by $\mu (1) = 1, \mu (k) = 0$ if $k$ is divisible by a square, and
$\mu (p_1 \ldots p_s) = (-1)^s $ if $p_1,\ldots , p_s$ are distinct prime numbers.\\

Using the the topside statement and looking  \cite[Lemma 1.1]{sal} and \cite{shi}, we have the following.\newline
%Let $ F $ be a free Lie algebra on set $ X $.
\begin{thm}\label{13}
Let $ F $ be a free Lie algebra on set $ X $, then $ F^c/ F^{c+i}$ is an abelian Lie algebra with the basis of all basic commutators on $ X $ of lengths $ c,c+1,\ldots,c-i+1 $ for all $0 \leq i \leq c$. In particular, $ F^c/ F^{c+1}$ is an abelian Lie algebra of dimension $l_d(c)$, where $ F^{c} $ is the $c$-th term of the lower central series of $F$.
\end{thm}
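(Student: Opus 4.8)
The plan is to exploit the natural grading of the free Lie algebra $F$ by commutator length. Write $F=\bigoplus_{n\geq 1}F_n$, where $F_n$ denotes the homogeneous component spanned by the commutators of length exactly $n$ in the generators $X$. Because $F$ is generated in degree one and bracketing adds degrees, each lower central term $F^c$ is a homogeneous ideal, and I would first establish the identification $F^c=\bigoplus_{n\geq c}F_n$. The inclusion $\bigoplus_{n\geq c}F_n\subseteq F^c$ holds because any commutator of length $n\geq c$ already lies in $F^n\subseteq F^c$, while the reverse inclusion follows from the fact that $F^c$ is homogeneous and contains no nonzero component of degree less than $c$. Passing to the quotient then yields the vector space isomorphism
\[ F^c/F^{c+i}\cong \bigoplus_{n=c}^{\,c+i-1}F_n. \]

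The substantive input is the basis theorem for free Lie algebras. By \cite{shi} together with \cite[Lemma 1.1]{sal}, the basic commutators of length $n$ form a basis of the homogeneous component $F_n$. Taking the union of these bases over $c\leq n\leq c+i-1$ and transporting them through the displayed isomorphism produces a basis of $F^c/F^{c+i}$ consisting precisely of all basic commutators on $X$ of lengths $c,c+1,\ldots,c+i-1$ (the length range being read in increasing order), which is the asserted basis.

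It then remains to verify that the quotient is abelian, and this is exactly where the hypothesis $0\leq i\leq c$ enters. Since $[F^c,F^c]\subseteq F^{2c}$ and the inequality $2c\geq c+i$ holds precisely when $i\leq c$, every bracket of two elements of $F^c$ falls into $F^{c+i}$; hence the induced bracket on $F^c/F^{c+i}$ is identically zero and the quotient is abelian. Finally, for the ``in particular'' clause I would specialize to $i=1$, giving $F^c/F^{c+1}\cong F_c$; since the basic commutators of length $c$ form a basis of $F_c$ and there are $l_d(c)$ of them by definition of $l_d$, we obtain $\dim F^c/F^{c+1}=l_d(c)$.

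I do not anticipate a serious obstacle once the basic-commutator basis theorem is invoked; the only delicate points are the bookkeeping of the grading so that $F^c=\bigoplus_{n\geq c}F_n$ is correctly justified, and the observation that the abelian conclusion is available only in the range $0\leq i\leq c$, beyond which $[F^c,F^c]$ need no longer lie in $F^{c+i}$.
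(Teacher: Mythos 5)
Your proof is correct. Note, however, that the paper does not actually prove this theorem: it is stated as a consequence of the preceding discussion together with \cite[Lemma 1.1]{sal} and \cite{shi}, so your write-up supplies the details that the paper delegates to its references. The route you take --- the grading $F=\bigoplus_{n\geq 1}F_n$, the identification $F^c=\bigoplus_{n\geq c}F_n$, the basic-commutator basis of each homogeneous component from \cite{shi}, and the abelian check via $[F^c,F^c]\subseteq F^{2c}\subseteq F^{c+i}$ exactly when $i\leq c$ --- is precisely the standard argument underlying those citations, so the two approaches coincide in substance; yours simply makes explicit where the hypothesis $0\leq i\leq c$ is used, which the paper never records. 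Two small remarks: the statement's list of lengths ``$c,c+1,\ldots,c-i+1$'' is a typo for $c,c+1,\ldots,c+i-1$, which you silently and correctly repair; and the dimension count in the ``in particular'' clause rests on the fact that the number of basic commutators of length $c$ on $d$ generators is $l_d(c)$, which in the paper's conventions is the definition of $l_d(c)$, with the Witt formula quoted from \cite{m} giving its closed form.
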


\section{Main results}
In this section, we intend to obtain the structure of a generalized Heisenberg Lie algebra $ H $ of rank $\f{1}{2}d(d-1),$ in where $d=d(H).$ Furthermore, we describe the Schur multiplier of such Lie algebras and then we detect  which ones of them are capable. It gives an  affirmative answer to \cite[Problem 1729]{berk} and generalized and enriched the result of \cite{rai} with a quite different way.

The following proposition determines  the minimal number of elements required to generate a finite dimensional nilpotent Lie algebra. 
\begin{prop}\label{1h}
Let L be a $d$-generator nilpotent Lie algebra of dimension n with the derived subalgebra of dimension $m. $ Then  $d=\dim L/L^2=n-m. $
\end{prop}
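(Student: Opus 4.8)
The plan is to reduce the statement to the Lie-algebra analogue of the Burnside Basis Theorem, namely: for a nilpotent Lie algebra $L$, a subset $\{x_1,\ldots,x_k\}$ generates $L$ if and only if the images $\bar x_1,\ldots,\bar x_k$ span the vector space $L^{(ab)}=L/L^2$. Granting this equivalence, the first equality follows at once: lifting a basis of $L/L^2$ to $L$ produces a generating set of size $\dim L/L^2$, so $d\le \dim L/L^2$; conversely any generating set maps onto a spanning set of $L/L^2$, so $d\ge \dim L/L^2$. The second equality $\dim L/L^2=n-m$ is then purely dimensional, coming from the short exact sequence of vector spaces $0\to L^2\to L\to L/L^2\to 0$, which gives $\dim L/L^2=\dim L-\dim L^2=n-m$.

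For the key equivalence, the forward implication is routine: if $x_1,\ldots,x_k$ generate $L$, then their images generate $L/L^2$, and since $L/L^2$ is abelian, generating it is the same as spanning it as a vector space. The substantive direction is the converse. I would suppose the images $\bar x_1,\ldots,\bar x_k$ span $L/L^2$, let $M$ be the subalgebra they generate, and record that this hypothesis is exactly the statement $M+L^2=L$. The goal is then to upgrade $M+L^2=L$ to $M=L$, and this is done by climbing the lower central series.

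The induction I have in mind runs as follows. Assume $L=M+L^{j}$ for some $j\ge 2$; the base case $j=2$ is the hypothesis. Expanding
\[
L^2=[L,L]=[M+L^{j},\,M+L^{j}]\subseteq [M,M]+[M,L^{j}]+[L^{j},L^{j}],
\]
and using $[M,M]\subseteq M$ together with $[M,L^{j}]\subseteq L^{j+1}$ and $[L^{j},L^{j}]\subseteq L^{j+1}$, I obtain $L^2\subseteq M+L^{j+1}$, whence $L=M+L^2\subseteq M+L^{j+1}$ and therefore $L=M+L^{j+1}$. Iterating, and invoking nilpotency to get $L^{c+1}=0$ for some $c$, after finitely many steps I reach $L=M+L^{c+1}=M$, so $M=L$.

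The only place where nilpotency is genuinely used—and hence the crux of the whole argument—is precisely this termination of the lower central series: without $L^{c+1}=0$ the relation $L=M+L^{j}$ could propagate indefinitely without ever forcing $M=L$. Everything else is elementary linear algebra over the ground field together with the inclusion $[M,M]\subseteq M$. I therefore expect no real obstacle beyond setting up the induction cleanly and being careful that $L^{(ab)}$ is treated as a vector space so that "spanning" and "generating" coincide.
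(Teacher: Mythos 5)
Your proof is correct, and it takes a genuinely more self-contained route than the paper's. Both arguments share the same skeleton: the equality $\dim L/L^2=n-m$ is pure dimension counting, the inequality $d\ge n-m$ is immediate because any generating set of $L$ maps onto a spanning set of $L/L^2$, and the whole weight rests on the converse implication that a subalgebra $M$ satisfying $M+L^2=L$ must equal $L$. The paper settles this step by citing Frattini theory: by a result of Marshall, for a nilpotent Lie algebra $L^2$ coincides with the Frattini subalgebra (the intersection of all maximal subalgebras), and then a non-generator lemma of Towers yields $\langle x_1,\ldots,x_{n-m}\rangle=L$ at once. You instead prove the implication from scratch by climbing the lower central series: from $L=M+L^{j}$ you deduce $L^2\subseteq [M,M]+[M,L^{j}]+[L^{j},L^{j}]\subseteq M+L^{j+1}$, hence $L=M+L^2\subseteq M+L^{j+1}$, and nilpotency ($L^{c+1}=0$) terminates the recursion at $M=L$. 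Your induction step is sound, since $[M,L^{j}]$ and $[L^{j},L^{j}]$ both land in $L^{j+1}$ and $[M,M]\subseteq M$ because $M$ is a subalgebra. What each approach buys: the paper's version is shorter on the page and places the proposition inside standard Frattini-subalgebra theory, but the real content is outsourced to two citations; yours is elementary and self-contained --- in effect you have reproved, for the nilpotent case, exactly the Marshall--Towers combination that the paper takes off the shelf.
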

\begin{proof}
We can choose a basis set $ \{ x_1+ L^2,\ldots,x_{n-m}+ L^2 \} $ for  $ L/L^2. $ Thus  $  L=\langle  x_1,\ldots,x_{n-m}\rangle +L^2.$ By \cite[Corollary 2]{mar}, $ L^2 $ is equal to the intersection of all maximal subalgebras of $L.$ Now \cite[Lemma 2.1]{two} implies   $ L=\langle  x_1,\ldots,x_{n-m}\rangle$ and so $ d\leq n-m.$ Let now $ L=\langle  y_1,\ldots,y_{d}\rangle.$ Therefore $ L/L^2=\langle  y_1+L^2,\ldots,y_{d}+L^2\rangle. $ Thus we have $  \dim L/L^2=n-m \leq d.$ Hence $ d=\dim L/L^2=n-m,$ as required.
\end{proof}
We need the following easy lemma.

\begin{lem}\cite[Lemma 14]{mon}\label{kk}
 Let $L$ be a  Lie algebra such that $ \dim (L/Z(L))=n. $ Then
 $ \dim L^2\leq \frac{1}{2} n(n-1).$
\end{lem}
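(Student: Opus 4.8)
The plan is to produce an explicit spanning set for $L^2$ whose cardinality is exactly $\frac{1}{2}n(n-1)$, from which the bound follows at once. First I would choose elements $x_1,\ldots,x_n \in L$ whose images $x_1+Z(L),\ldots,x_n+Z(L)$ form a basis of the $n$-dimensional quotient $L/Z(L)$. Every element of $L$ then decomposes as $\sum_{i=1}^n \alpha_i x_i + z$ for suitable scalars $\alpha_i$ and some $z \in Z(L)$.

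The key observation is that the bracket is bilinear and that $[L,Z(L)]=0$, so the central summands contribute nothing to any commutator. Concretely, for $x=\sum_i \alpha_i x_i + z$ and $y=\sum_j \beta_j x_j + w$ one gets $[x,y]=\sum_{i,j}\alpha_i\beta_j[x_i,x_j]$. Hence $L^2=[L,L]$ is spanned by the finite set $\{[x_i,x_j] : 1\le i,j\le n\}$.

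Next I would invoke the alternating property of the Lie bracket to trim this spanning set: $[x_i,x_i]=0$ kills the diagonal, while $[x_j,x_i]=-[x_i,x_j]$ identifies each pair with its transpose up to sign. Thus $L^2$ is already spanned by $\{[x_i,x_j] : 1\le i<j\le n\}$, a set of $\binom{n}{2}=\frac{1}{2}n(n-1)$ elements, and therefore $\dim L^2 \le \frac{1}{2}n(n-1)$.

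I do not anticipate a serious obstacle here, consistent with the fact that the statement is advertised as an \emph{easy} lemma; the only point requiring care is the reduction from commutators of arbitrary elements of $L$ to commutators of the fixed representatives $x_i$, which rests squarely on bilinearity together with the defining property $[L,Z(L)]=0$ of the centre. Everything beyond that is bookkeeping with the antisymmetry of the bracket.
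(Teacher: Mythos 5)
Your proof is correct and complete. Note, however, that the paper offers no proof of this lemma to compare against: it is stated as a quoted result from \cite[Lemma 14]{mon}. Your argument is the standard one for this fact --- choosing coset representatives $x_1,\ldots,x_n$ of a basis of $L/Z(L)$, then using bilinearity together with $[L,Z(L)]=0$ and antisymmetry to conclude that the $\binom{n}{2}$ brackets $[x_i,x_j]$, $i<j$, span $L^2$ --- which is equivalent to observing that the bracket induces a surjective linear map from the exterior square of $L/Z(L)$ onto $L^2$, and is essentially the proof given in the cited source.
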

 
\begin{cor}\label{hh}
Let $H$ be an $n$-dimensional generalized Heisenberg Lie algebra of rank $m.$ Then  $d(H)=\dim (H/Z(H))=n-m $ and
 $ \dim H^2\leq \frac{1}{2} (n-m)(n-m-1).$
\end{cor}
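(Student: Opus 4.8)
The plan is to read off both assertions directly from the two results established just before this corollary, once we unpack the meaning of a generalized Heisenberg Lie algebra. By Definition \ref{1}, saying that $H$ has rank $m$ means precisely that $H^2=Z(H)$ and $\dim H^2=m$. This single identity $H^2=Z(H)$ is what glues the two halves of the statement together, so I would record it at the outset and keep it in view throughout.

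For the first chain of equalities, I would invoke Proposition \ref{1h} with $L=H$. Since $H$ is an $n$-dimensional nilpotent Lie algebra whose derived subalgebra has dimension $m$, that proposition yields $d(H)=\dim H/H^2=n-m$. To bring the centre into the picture, I would then use $H^2=Z(H)$ to identify $H/H^2$ with $H/Z(H)$; this gives $\dim(H/Z(H))=\dim(H/H^2)=n-m$ and hence the full chain $d(H)=\dim(H/Z(H))=n-m$.

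For the inequality, the key observation is that the quantity appearing as the hypothesis of Lemma \ref{kk} is exactly $\dim(H/Z(H))$, which we have just computed to equal $n-m$. Applying that lemma with this value substituted for its parameter immediately produces $\dim H^2\le \frac{1}{2}(n-m)(n-m-1)$, which completes the argument.

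Since everything is a direct specialization of earlier statements, I do not anticipate a genuine obstacle. The only point requiring care is the bookkeeping of which symbol plays which role: in particular, one must keep the ambient dimension $n$ of $H$ distinct from the generic parameter (also named $n$) in the statement of Lemma \ref{kk}, whose slot is here occupied by the value $n-m$.
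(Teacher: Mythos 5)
Your proof is correct and follows exactly the paper's route: the paper's own proof is the one-line statement that the corollary follows from Proposition \ref{1h} and Lemma \ref{kk}, and you have simply filled in the details, including the key identification $H/H^2=H/Z(H)$ coming from the defining property $H^2=Z(H)$ of a generalized Heisenberg algebra. No issues.
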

\begin{proof}
It is a conclusion of  Proposition \ref{1h} and Lemma \ref{kk}.
\end{proof}
%The following result is similar to \cite[Lemma 14]{mon}.
We are a position to determine the structure a $d$-generator generalized Heisenberg Lie algebra $ H $ of rank $ \frac{1}{2}d(d-1).$ 
\begin{prop}\label{h1}
 Let $ H $ be a  $d$-generator generalized Heisenberg Lie algebra of rank $ \frac{1}{2}d(d-1).$ Then  $ \dim H=\frac{1}{2}d(d+1) $ and $ H $ has the presentation $ \langle x_1,\ldots,x_d,y_{i_j}|[x_i,x_j]$  $=y_{i_j},1\leq i<j\leq d\rangle. $
\end{prop}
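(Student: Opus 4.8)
The plan is to separate the statement into its two assertions---first the dimension formula $\dim H=\frac{1}{2}d(d+1)$, then the presentation---and to obtain the former immediately from the machinery already assembled, reserving the real work for pinning down the bracket structure.

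For the dimension I would simply feed the hypotheses into Corollary \ref{hh}. Writing $n=\dim H$ and $m=\dim H^2=\frac{1}{2}d(d-1)$, that corollary gives $d=d(H)=n-m$, whence $n=d+m=d+\frac{1}{2}d(d-1)=\frac{1}{2}d(d+1)$. This needs nothing beyond arithmetic; note also that the inequality in Corollary \ref{hh} reads $\dim H^2\leq\frac{1}{2}d(d-1)$, so the rank hypothesis is exactly the statement that this bound is attained.

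For the presentation, the first observation is that the defining condition $H^2=Z(H)$ forces $[H^2,H]=0$, so $H$ is nilpotent of class at most $2$. I would then fix a minimal generating set $x_1,\dots,x_d$ (available since $d(H)=d$) and note that, because $H$ is generated by the $x_i$ and $[H^2,H]\subseteq[Z(H),H]=0$, the derived algebra $H^2=[H,H]$ is spanned by the $\binom{d}{2}$ brackets $[x_i,x_j]$ with $1\leq i<j\leq d$. The crucial step is the comparison of cardinalities: there are exactly $\frac{1}{2}d(d-1)=\dim H^2$ such brackets, so a spanning set of this size must be a basis. Hence the elements $y_{i_j}:=[x_i,x_j]$, $1\leq i<j\leq d$, are linearly independent and form a basis of $H^2$. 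This linear independence is \emph{not} automatic---it is forced precisely by the maximality hypothesis $\dim H^2=\frac{1}{2}d(d-1)$, and I expect this to be the one point that genuinely uses the assumption.

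It then remains to assemble these data into the stated presentation. Since $\{x_1+H^2,\dots,x_d+H^2\}$ is a basis of $H/H^2$ and $\{y_{i_j}\}$ is a basis of $H^2$, their union is a basis of $H$ of the correct cardinality $\frac{1}{2}d(d+1)$, and all structure constants are now forced: $[x_i,x_j]=y_{i_j}$ by definition, while $[x_i,y_{i_j}]=0$ and $[y_{i_j},y_{k_l}]=0$ because the $y$'s lie in $H^2=Z(H)$. To package this as an isomorphism with the presented algebra, I would let $F$ be the free Lie algebra on $x_1,\dots,x_d$; the map sending the free generators to $x_1,\dots,x_d$ induces a surjection $F\to H$ which kills $F^3$ (as $H$ has class at most $2$), hence a surjection $F/F^3\twoheadrightarrow H$. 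By Theorem \ref{13}, $F^2/F^3$ is abelian with basis the $l_d(2)=\frac{1}{2}d(d-1)$ basic commutators of length two, so $\dim F/F^3=d+\frac{1}{2}d(d-1)=\frac{1}{2}d(d+1)=\dim H$; a surjection between spaces of equal finite dimension is an isomorphism, and $F/F^3$ is exactly the algebra with the displayed presentation. The case $d=1$ is the trivial one-dimensional abelian algebra and can be recorded separately.
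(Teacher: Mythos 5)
Your proposal is correct and follows essentially the same route as the paper: Corollary \ref{hh} gives $d(H)=\dim (H/Z(H))=d$ together with the dimension formula, and the count of the $\frac{1}{2}d(d-1)$ spanning brackets $[x_i,x_j]$ against $\dim H^2$ forces them to be a basis of $H^2$, which yields the stated structure. Your closing step, identifying $H$ with $F/F^3$ via a surjection between spaces of equal dimension (using Theorem \ref{13}), is a rigorous packaging of what the paper simply asserts when it passes from the basis to the presentation, but it is the same underlying argument rather than a different approach.
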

\begin{proof} 
By Corollary \ref{hh}, we have  $ d(H)=\dim (H/Z(H))= d.$
We can choose a basis set $ \{ x_1+ Z(H),\ldots,x_d+ Z(H)\} $ for the $ H/Z(H) $ such that $ [x_t,x_r] $ is non-trivial for all $ 1\leq t,r\leq d $ and $ t\neq r. $
It is clear   to see that the set $ \{[x_i,x_j]|1\leq i<j\leq d\} $ is a basis of $ H^2. $ Since $ \dim H^2= \frac{1}{2}d(d-1),$ we have 
$H= \langle x_1,\ldots,x_d,y_{i_j}|[x_i,x_j]=y_{i_j},1\leq i<j\leq d\rangle $ and $ \dim H=\frac{1}{2}d(d+1).$ The result follows.
\end{proof}
In the following, we give the structure of the Schur multiplier of a $d$-generator generalized Heisenberg Lie algebra $ H $ of rank $ \frac{1}{2}d(d-1).$ 
\begin{prop}\label{h2}
Let $ H $ be a  $d$-generator generalized Heisenberg Lie algebra of rank $ \frac{1}{2}d(d-1).$ Then
$  \mathcal{M}(H)\cong A(l_d(3)). $
\end{prop}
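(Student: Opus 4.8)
The plan is to recognize $H$ as the free nilpotent Lie algebra of class two on $d$ generators and then read off its Schur multiplier from the lower central series of the ambient free Lie algebra, where Theorem \ref{13} converts everything into a count of basic commutators.

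First I would take a free Lie algebra $F$ on $\{x_1,\ldots,x_d\}$ and form the surjection $\varphi\colon F\to H$ extending $x_i\mapsto x_i$. Because $H$ is nilpotent of class two, $F^3\subseteq\ker\varphi$, so $\varphi$ induces a surjection $\bar\varphi\colon F/F^3\to H$. By Theorem \ref{13} the quotient $F^2/F^3$ is abelian of dimension $l_d(2)=\frac{1}{2}d(d-1)$, hence $\dim(F/F^3)=d+l_d(2)=\frac{1}{2}d(d+1)$, which by Proposition \ref{h1} is exactly $\dim H$. A surjective linear map between spaces of the same finite dimension is bijective, so $\bar\varphi$ is an isomorphism and $H\cong F/F^3$ with defining ideal $R=F^3$.

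With the free presentation in hand I would invoke the Lie-algebra analogue of the Hopf formula, $\mathcal{M}(F/R)\cong (R\cap F^2)/[F,R]$ for a free presentation $L=F/R$. Taking $R=F^3$ gives $R\cap F^2=F^3$ and $[F,R]=[F,F^3]=F^4$, so $\mathcal{M}(H)\cong F^3/F^4$. Applying Theorem \ref{13} once more, now with $c=3$, shows that $F^3/F^4$ is abelian of dimension $l_d(3)$, and therefore $\mathcal{M}(H)\cong A(l_d(3))$, as required.

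If one wishes to stay entirely within the quoted tools, the same answer is reachable through Lemma \ref{j1}: one identifies the exterior square $H\wedge H$ with $F^2/F^4$ and verifies that under this identification $\kappa'$ becomes the canonical projection $F^2/F^4\to F^2/F^3\cong H^2$, whose kernel is again $F^3/F^4$. In either route the only non-formal step is this identification of the multiplier with $F^3/F^4$, i.e.\ checking $R\cap F^2=F^3$ together with $[F,F^3]=F^4$ (equivalently, pinning down $H\wedge H$); once that is secured, the passage to $A(l_d(3))$ is an immediate dimension count via basic commutators, so I expect this identification to be the main obstacle.
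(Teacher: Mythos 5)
Your proof is correct, but it takes a genuinely different route from the paper's. You identify $H$ with the free nilpotent Lie algebra $F/F^3$ of class two on $d$ generators (via a dimension count using Proposition \ref{h1} and Theorem \ref{13}) and then invoke the Hopf formula $\mathcal{M}(F/R)\cong (R\cap F^2)/[F,R]$, which with $R=F^3$ gives $\mathcal{M}(H)\cong F^3/F^4\cong A(l_d(3))$. This is legitimate: the Hopf formula is precisely how the Schur multiplier is defined in the sources the paper cites for $\mathcal{M}(L)$ (Batten's thesis and Batten--Moneyhun--Stitzinger), and the two ingredients $F^3\cap F^2=F^3$ and $[F,F^3]=F^4$ are immediate from the definition of the lower central series, so the step you flag as the ``main obstacle'' is in fact harmless. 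The paper instead runs the computational method of Hardy and Stitzinger: it writes the defining relations of $H$ with multiplier tails $s_{i_j}$, $z_{i_{j}t}$, $a_{i_{j}k_{t}}$, absorbs the $s_{i_j}$ by a change of variables, kills the $a_{i_{j}k_{t}}$ with the Jacobi identity, and recognizes the surviving $z_{i_{j}t}$ as the basic commutators of length three, counted by Theorem \ref{13}. Your argument is shorter and makes it conceptually transparent why $l_d(3)$ appears (the multiplier is literally $F^3/F^4$), and it also exhibits the cover as $F/F^4$, which is relevant to Theorem \ref{h5}; the paper's hands-on method has the compensating advantage that it adapts to quotients of $H$ that are no longer free nilpotent of class two, which is exactly what its proof of Proposition \ref{h3} requires.
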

\begin{proof}
By Proposition \ref{h1}, we have $ H\cong \langle x_1,\ldots,x_d,y_{i_j}|[x_i,x_j]=y_{i_j},1\leq i<j\leq d\rangle. $
Using the method of Hardy and Stitzinger in \cite{hardy}, since $\mathcal{M}(H)$ is abelian,  we just need to compute $\dim \mathcal{M}(H). $ Start with
\begin{align*}
&[x_i,x_j]=y_{i_j}+s_{i_j},1\leq i<j\leq d,\\
&[y_{i_j},x_t]=z_{i_{j}t},1\leq i<j\leq d~\text{and}~1\leq t\leq d,\\
&[y_{i_j},y_{k_t}]=a_{i_{j}k_{t}},1\leq i<j\leq d~\text{and}~1\leq k<t\leq d,
\end{align*}
where $s_{i_j},z_{i_{j}t},$ and $a_{i_{j}k_{t}}  $ generate $ \mathcal{M}(H). $ Putting $y_{i_j}'=y_{i_j}+s_{i_j},1\leq i<j\leq d.  $
A change of variables allows that $ s_{i_j}=0,$ for all $1\leq i<j\leq d. $ Using of the Jacobi identity, we have $a_{i_{j}k_{t}}=[y_{i_j},y_{k_t}]=[x_i,x_j,y_{k_t}]=[z_{i_{j}t},y_{k_t}]=0, $ for all $ 1\leq i<j\leq d$ and $1\leq k<t\leq d. $ We know that $z_{i_{j}t}=[y_{i_j},x_t]$ for all $1\leq i<j\leq d,$ and $1\leq t\leq d$ is a simple commutator of the length three . Put $ S=\langle z_{i_{j}t}|1\leq i<j\leq d~\text{and}~1\leq t\leq d\rangle. $ Clearly, the set of all simple basic commutators of  length three is a basis of $ S. $ Thus $ \mathcal{M}(H)=S $ and $\dim S=l_d(3),$ by using Theorem \ref{13}. The proof is completed.
\end{proof}
We are ready to decide about the capability of a  $d$-generator generalized Heisenberg Lie algebra $ H $ of rank $ \frac{1}{2}d(d-1).$ At first, we need the following proposition.
\begin{prop}\label{h3}
Let $ H$ be a  $d$-generator generalized Heisenberg Lie algebra of rank $ \frac{1}{2}d(d-1)$ such that  $K$ be a one-dimensional subalgebra containing in $ H^2.$ Then $ \mathcal{M}(H/K)\cong A(\l_d(3)-d+1).$ 
\end{prop}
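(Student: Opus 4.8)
The plan is to realize $H$ as the free nilpotent Lie algebra of class two on $d$ generators and to read off $\mathcal{M}(H/K)$ from a free presentation, in the spirit of the Hardy--Stitzinger computation used in Proposition \ref{h2}. First I would observe that the presentation in Proposition \ref{h1}, together with the linear independence of the $\frac{1}{2}d(d-1)$ brackets $[x_i,x_j]$, identifies $H$ with $F/F^3$, where $F$ is the free Lie algebra on $x_1,\dots,x_d$: indeed $\dim F/F^3 = d + l_d(2) = \frac{1}{2}d(d+1) = \dim H$ by Theorem \ref{13}, and the canonical surjection $F/F^3\to H$ is then an isomorphism. Writing $K=\langle \bar w\rangle$ with $0\neq\bar w\in H^2$ and lifting to $\hat w\in F^2\setminus F^3$, the quotient acquires the presentation $H/K\cong F/\tilde R$ with $\tilde R=\mathbb{F}\hat w+F^3$; this is an ideal because $[\hat w,F]\subseteq F^3$.

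Next I would apply the Hopf-type formula $\mathcal{M}(F/\tilde R)\cong(\tilde R\cap F^2)/[\tilde R,F]$. Since $\tilde R\subseteq F^2$ this equals $\tilde R/[\tilde R,F]$, and because $[\hat w,F^2]\subseteq F^4$ one has $[\tilde R,F]=\langle[\hat w,x_1],\dots,[\hat w,x_d]\rangle+F^4$ as a subspace of $F^3$. As $\hat w\notin F^3\supseteq[\tilde R,F]$, the class of $\hat w$ is independent of $F^3/[\tilde R,F]$, so that
\[
\dim \mathcal{M}(H/K)=1+\dim\big(F^3/F^4\big)-r=1+l_d(3)-r,
\]
where $r=\dim\langle[\hat w,x_1],\dots,[\hat w,x_d]\rangle$ computed in $F^3/F^4$ (using Theorem \ref{13} for $\dim F^3/F^4=l_d(3)$). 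Everything then reduces to proving $r=d$.

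The main obstacle is exactly this last point: showing that $[\hat w,x_1],\dots,[\hat w,x_d]$ are linearly independent in $F^3/F^4$, \emph{uniformly for every nonzero} $\hat w$ --- this independence of the choice of $K$ is what makes $\dim\mathcal{M}(H/K)$ insensitive to the rank of $\hat w$ as an alternating form, even though the presentation of $H/K$ itself depends on that rank. Identifying $F^2/F^3\cong\Lambda^2 V$ and $F^3/F^4\cong L_3(V)$ with $V=\langle x_1,\dots,x_d\rangle$, the claim becomes that the linear map $V\to L_3(V)$, $v\mapsto[\hat w,v]$, is injective. I would use that the kernel of the bracket map $\Lambda^2 V\otimes V\to L_3(V)$ is precisely the image of the Jacobiator $J\colon\Lambda^3 V\to\Lambda^2 V\otimes V$, $a\wedge b\wedge c\mapsto (a\wedge b)\otimes c+(b\wedge c)\otimes a+(c\wedge a)\otimes b$, and then show this image contains no nonzero decomposable tensor $\hat w\otimes v$. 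Applying $GL(V)$ to reduce to $v=x_1$ and splitting $\hat w=x_1\wedge u+\hat w'$ with $u\in\langle x_2,\dots,x_d\rangle$ and $\hat w'\in\Lambda^2\langle x_2,\dots,x_d\rangle$, a comparison of the multidegree in $x_1$ separates the two summands, and inspecting the second-slot directions of $J(\hat w\wedge x_1)$ forces $u=0$ and $\hat w'=0$; hence $[\hat w,x_1]=0$ only when $\hat w=0$. This gives $r=d$, so $\dim\mathcal{M}(H/K)=l_d(3)-d+1$, and since the Schur multiplier is abelian (Lemma \ref{j1}) we conclude $\mathcal{M}(H/K)\cong A(l_d(3)-d+1)$.
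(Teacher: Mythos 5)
Your proof is correct, but it takes a genuinely different route from the paper, and in one important respect it proves more. The paper runs the Hardy--Stitzinger machine directly on a presentation of $H/K$: it first normalizes $K=\langle [x_i,x_j]\rangle$ for a single pair $i<j$, then adjoins tails $s_{l_m}, z_{l_mt}, a_{l_mk_t}$ to the relations, kills the $a$'s by the Jacobi identity, and counts which basic commutators of length three survive, removing $d$ of them ($[x_j,x_i,x_k]$ for $i\leq k\leq d$ and $[x_j,x_r,x_i]$ for $r<i$) and adding back $s_{i_j}$ to land on $l_d(3)-d+1$. You instead identify $H\cong F/F^3$, invoke the Hopf-type formula $\mathcal{M}(F/\tilde R)\cong(\tilde R\cap F^2)/[\tilde R,F]$, and reduce the whole computation to the single linear-algebra claim that $v\mapsto[\hat w,v]$ is injective from $V$ into $F^3/F^4$, proved via the exact sequence $0\to\Lambda^3V\to\Lambda^2V\otimes V\to L_3(V)\to 0$ and a multidegree argument (I checked this argument; it is sound, though strictly speaking the element you inspect should be $J(\eta)$ for an \emph{unknown} preimage $\eta\in\Lambda^3V$, whose shape is only then forced to be $x_1\wedge\hat w'$ by the degree comparison). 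What your approach buys is exactly the point you flag: uniformity in $\hat w$. The paper's normalization $K=\langle[x_i,x_j]\rangle$ tacitly assumes the generator of $K$ is a decomposable element of $H^2\cong\Lambda^2V$, i.e.\ of rank two as an alternating form; for $d\geq 4$ a subalgebra such as $K=\langle[x_1,x_2]+[x_3,x_4]\rangle$ cannot be brought to that form by any change of generators, so the paper's proof as written does not cover it, whereas yours does. This matters beyond aesthetics, because Theorem \ref{h6} applies Lemma \ref{a} to \emph{every} one-dimensional central ideal $K$ of $H$, so the capability conclusion actually requires the statement in the generality you establish. What the paper's approach buys in exchange is elementary self-containedness: it stays within the same tails-and-basic-commutators technique used for Proposition \ref{h2}, needing neither the Hopf formula nor the identification of $\ker(\Lambda^2V\otimes V\to L_3(V))$ with the image of the Jacobiator.
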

\begin{proof}
By Proposition \ref{h1}, we have $H\cong \langle x_1,\ldots,x_d,y_{i_j}|[x_i,x_j]=y_{i_j},1\leq i<j\leq d\rangle. $ Clearly $ K=\langle [x_i,x_j] \rangle$ for all $ 1\leq i<j\leq d $. Therefore $H/K\cong  \langle x_1+K,\ldots,x_d+K,y_{l_m}+K|[x_l,x_m]+K=y_{l_m}+K,1\leq l<m\leq d,~\text{and}~,l\neq i,m\neq j\rangle$.
Using the method of Hardy and Stitzinger in \cite{hardy}, we compute $\dim \mathcal{M}(H/K). $ Start with
\begin{align*}
&[x_l,x_m]=y_{l_m}+s_{l_m}
,1\leq l<m\leq d~\text{for}~l\neq i ~\text{and}~ m\neq j,\\&[x_i,x_j]=s_{i_j},  \\
&[y_{l_m},x_t]=z_{l_{m}t},1\leq l<m\leq d ~\text{and}~1\leq t\leq d ~\text{for}~l\neq i ~\text{and}~ m\neq j,\\
&[y_{l_m},y_{k_t}]=a_{l_{m}k_{t}},1\leq l<m\leq d~(l\neq i ~\text{and}~m\neq j)~\text{and}~1\leq k<t\leq d~\\&(k\neq i~\text{and}~t\neq j)
\end{align*}
where $s_{l_m},s_{i_j},z_{l_{m}t}  $ and $ a_{l_{m}k_{t}} $ generate $ \mathcal{M}(H/K). $ Putting $y_{l_m}'=y_{l_m}+s_{l_m},1\leq l<m\leq d,~\text{for}~l\neq i~\text{and}~m\neq j.  $ 
A change of variables allows that $ s_{l_m}=0, $ for all $1\leq l<m\leq d $($l\neq i$ and $m\neq j$). Use of the Jacobi identity,  $a_{l_{m} k_{t}}=[y_{l_{m}},y_{k_t}]=[x_l,x_m,y_{k_t}]=[z_{l_{m}t},y_{k_t}]=0 $ for all $1\leq l<m\leq d$~ ( $l\neq i$ and $m\neq j$) and $1\leq k<t\leq d$~ (  $k\neq i$ and $t\neq j$). Thus the set $\{z_{l_m t},s_{i_j}|1\leq l<m\leq d~\text{and}~1\leq t\leq d~\text{for}~l\neq i ~\text{and}~m\neq j\}$ generates $ \mathcal{M}(H/K).  $ We know that $z_{p_qt}=[y_{p_q},x_t]$   is a simple commutator of length  three for all $1\leq p<q\leq d$ and $1\leq t\leq d.$ Put $ S=\langle z_{p_qt}|1\leq p<q\leq d ~\text{and }~1\leq t\leq d\rangle. $  By the definition of basic commutators, the generators $x_1,x_2,\ldots, x_d$ are basic commutators of length one and ordered by setting $x_i < x_j$ if $i < j.$ Clearly, the set of all simple basic commutators of  the length three is a basis of $ S. $ Thus $ S=\langle [x_j,x_i,x_t]|1\leq i<j\leq d~\text{and}~i\leq t\leq d\rangle $ and $\dim S=l_d(3),$ by using Theorem \ref{13}.
Since $[s_{i_j},x_k]=[x_i,x_j,x_k]=0,  $ we have $ [x_j,x_i,x_k]\notin \mathcal{M}(H/K)$  for all $ i\leq k\leq d. $ By using Jacobian identity, we have $[x_j,x_r,x_i]=[x_i,x_r,x_j]  $ for all $ 1\leq r\leq i-1.  $ Thus  the set $ (\{ [x_j,x_i,x_t]|1\leq i<j\leq d~\text{and}~i\leq t\leq d\} - \{ [x_j,x_i,x_k],[x_j,x_r,x_i]| i\leq k \leq d~\text{and}~1\leq r\leq i-1 \})\cup \{s_{i_j}\} $ is a basis of $\mathcal{M}(H/K)  $ and so  $ \dim  \mathcal{M}(H/K)=\l_d(3)-d+1, $ as required.
 \end{proof}
The following theorem shows all such Lie algebras are capable.
 \begin{thm}\label{h6}
Let $ H $ be a  $d$-generator generalized Heisenberg Lie algebra of rank $ \frac{1}{2}d(d-1).$ Then $ H $ is capable.
\end{thm}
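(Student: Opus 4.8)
The plan is to apply the capability criterion of Lemma \ref{a}: the algebra $H$ is capable precisely when, for every non-zero $x \in Z(H)$, the natural map $\mathcal{M}(H) \to \mathcal{M}(H/\langle x\rangle)$ induced by the projection $H \to H/\langle x\rangle$ has a non-trivial kernel. Since $H$ is a generalized Heisenberg Lie algebra we have $Z(H)=H^2$, so every such $x$ spans a one-dimensional subalgebra $K=\langle x\rangle$ contained in $H^2$. This is exactly the situation already analysed in Proposition \ref{h3}, so the reduction to a dimension count is immediate.

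First I would record the two dimension counts already at our disposal. By Proposition \ref{h2} we have $\dim\mathcal{M}(H)=l_d(3)$, while Proposition \ref{h3} gives $\dim\mathcal{M}(H/K)=l_d(3)-d+1$ for \emph{every} one-dimensional subalgebra $K\subseteq H^2$. The hypothesis $\dim H^2=\tfrac{1}{2}d(d-1)\geq 1$ forces $d\geq 2$, so the second quantity is strictly smaller than the first, the gap being $d-1\geq 1$. The key observation is then purely linear-algebraic: both Schur multipliers are abelian, so the natural map $\mathcal{M}(H)\to\mathcal{M}(H/K)$ is a linear map of vector spaces whose domain has dimension $l_d(3)$ and whose codomain has strictly smaller dimension $l_d(3)-d+1$. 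A linear map from a space of larger dimension into one of strictly smaller dimension cannot be injective, so its kernel is non-trivial. Since $K=\langle x\rangle$ ranges over all one-dimensional subalgebras of $H^2=Z(H)$ as $x$ ranges over the non-zero central elements, the criterion of Lemma \ref{a} holds for every such $x$, and therefore $H$ is capable.

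I do not expect a genuine obstacle here, as Propositions \ref{h2} and \ref{h3} already carry the computational weight and the remaining argument is a one-line dimension comparison. The only point requiring care is the uniformity over central elements: Lemma \ref{a} demands a non-trivial kernel for \emph{every} non-zero $x\in Z(H)$, not merely for one, which is precisely why Proposition \ref{h3} was stated for an arbitrary one-dimensional subalgebra of $H^2$ rather than a fixed one. I would also take a moment to confirm that the map appearing in Lemma \ref{a} is indeed the projection-induced map on Schur multipliers, so that the comparison of $\dim\mathcal{M}(H)$ with $\dim\mathcal{M}(H/\langle x\rangle)$ is legitimate and the conclusion of non-injectivity transfers directly to the hypothesis of the capability criterion.
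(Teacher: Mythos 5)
Your proposal is correct and follows essentially the same route as the paper's own proof: both invoke Proposition \ref{h2} and Proposition \ref{h3} to get $\dim\mathcal{M}(H)=l_d(3)$ and $\dim\mathcal{M}(H/K)=l_d(3)-d+1$ for every one-dimensional central ideal $K$, conclude the induced map $\mathcal{M}(H)\to\mathcal{M}(H/K)$ cannot be injective, and then apply the criterion of Lemma \ref{a}. Your explicit remarks that $Z(H)=H^2$ (so the one-dimensional subalgebras of $H^2$ exhaust all $\langle x\rangle$ with $x\in Z(H)$ non-zero) and that $d\geq 2$ makes the gap $d-1$ strictly positive are points the paper leaves implicit, but the argument is the same.
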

\begin{proof}
Propositions \ref{h2} and \ref{h3} imply $ \dim  \mathcal{M}(H)=\l_d(3)$ and
 $ \dim  \mathcal{M}(H/K)=\l_d(3)-d+1 $  for every one-dimensional central ideal $  K $ of $ H. $ Since $ \dim  \mathcal{M}(H)>$\newline$ \dim  \mathcal{M}(H/K),  $ the homomorphism $    \mathcal{M}(H)\rightarrow  \mathcal{M}(H/K) $ is not monomorphism. Thus the result follows from Lemma \ref{a}.
\end{proof}
Now we show that the converse of Proposition \ref{h2} is also true.

\begin{thm}
Let $ H $ be a $d$-generator generalized Heisenberg Lie algebra. Then $\dim H^2=\frac{1}{2}d(d-1)$ if and only if
$  \mathcal{M}(H)\cong A(l_d(3)).$
\end{thm}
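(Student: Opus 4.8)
The forward implication is precisely Proposition \ref{h2}, so the work lies in the converse: assuming $\mathcal{M}(H)\cong A(l_d(3))$ I must recover $\dim H^2=\tfrac12 d(d-1)$. Since the Schur multiplier of a Lie algebra is automatically abelian, the hypothesis is equivalent to the numerical equality $\dim\mathcal{M}(H)=l_d(3)$. The plan is to present $H$ as a quotient of a free Lie algebra, read off $\dim\mathcal{M}(H)$ from the Hopf-type formula of Lemma \ref{j1}, and then show that the value $l_d(3)$ is attained only in the extremal case. Let $F$ be the free Lie algebra on $x_1,\ldots,x_d$, graded by length so that the length-$n$ component is spanned by the basic commutators of length $n$ and has dimension $l_d(n)$ by Theorem \ref{13}. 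Because $d(H)=d$ and $H^2=Z(H)$ forces nilpotency class $2$, I may write $H\cong F/R$ with $F^3\subseteq R\subseteq F^2$; then $V:=R/F^3$ is a subspace of $F^2/F^3$, and since $\dim H^2=\dim(F^2/F^3)-\dim V=\tfrac12 d(d-1)-\dim V$, the number $t:=\dim V=\tfrac12 d(d-1)-\dim H^2$ measures exactly the deviation from the extremal case (Corollary \ref{hh} gives $t\ge 0$, and $d\ge 2$). Via the grading I take $V$ to consist of length-$2$ elements.

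Next I would compute the multiplier. By Lemma \ref{j1}, $\mathcal{M}(H)=(R\cap F^2)/[F,R]=R/[F,R]$. Writing $R=V\oplus F^3$ and using $[F,F^3]=F^4$ together with $[F,V]\subseteq F^3$, one gets $[F,R]=T+F^4$, where $T:=\mathrm{span}\{[x_k,v]\mid v\in V,\ 1\le k\le d\}$ sits in length $3$. Reading $R/[F,R]$ off by length shows it is concentrated in lengths $2$ and $3$, whence
\[
\dim\mathcal{M}(H)=t+\bigl(l_d(3)-\dim T\bigr).
\]
Therefore $\dim\mathcal{M}(H)=l_d(3)$ if and only if $\dim T=t$, and the theorem reduces to the claim that \emph{for a length-$2$ subspace $V$ one always has $\dim T\ge \dim V=t$, with equality if and only if $V=0$.}

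To prove the claim I would split into a soft lower bound and a rigid equality analysis. For the bound, a direct coefficient comparison in the free associative algebra $k\langle x_1,\ldots,x_d\rangle$ shows that $\mathrm{ad}_{x_1}$ is injective on the length-$2$ component: if $[x_1,v]=0$ for a length-$2$ Lie element $v$, comparing the coefficients of the monomials $x_1x_px_q$ forces $v=0$. Hence $\dim[x_1,V]=\dim V=t$, and since $[x_1,V]\subseteq T$ this gives $\dim T\ge t$. The equality case is the heart of the matter: $\dim T=t$ forces $[x_1,V]=T$, and by the same injectivity for $\mathrm{ad}_{x_2}$ also $[x_2,V]=T$, so every $v\in V$ satisfies an identity $[x_2,v]=[x_1,w]$ with $w\in V$. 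Comparing coefficients of the monomials $x_2x_px_q$ with $q\notin\{1,2\}$ in this identity I expect to force $v\in\langle[x_1,x_2]\rangle$, so that $t\le 1$. The residual case $V=\langle[x_1,x_2]\rangle$ is then excluded by inspection, since $[x_1,[x_1,x_2]]$ and $[x_2,[x_1,x_2]]$ are independent length-$3$ basic commutators, giving $\dim T\ge 2>1=t$. This contradiction yields $V=0$, i.e. $\dim H^2=\tfrac12 d(d-1)$.

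The main obstacle is precisely this equality analysis. The inequality $\dim T\ge\dim V$ is formal once injectivity of a single adjoint map is in hand, but excluding the borderline configurations — showing no nonzero $V$ can satisfy $\dim T=\dim V$ — is where one must exploit the concrete combinatorial structure of the free Lie algebra through the monomial bookkeeping in its enveloping algebra.
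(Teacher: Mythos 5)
Your proposal is correct, and it takes a genuinely different (and more complete) route than the paper. The paper's own proof of the converse is a single sentence: arguing by contrapositive, the authors assert that ``by a similar argument to the proof of Proposition~\ref{h3}'' one sees that $\dim H^2<\frac{1}{2}d(d-1)$ forces $\dim\mathcal{M}(H)<l_d(3)$ --- that is, they invoke the Hardy--Stitzinger generator-and-relation elimination that was carried out in Proposition~\ref{h3} only for a one-dimensional central ideal, and leave the general case to the reader. You instead write $H=F/R$ with $F^3\subseteq R\subseteq F^2$, identify $V=R/F^3$ with a homogeneous degree-two subspace, and read the Hopf quotient $R/[F,R]$ degree by degree to obtain the exact identity $\dim\mathcal{M}(H)=l_d(3)+\dim V-\dim[W,V]$, where $W$ is the span of the generators; the theorem then reduces to the sharp inequality $\dim[W,V]>\dim V$ for $V\neq 0$. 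The steps you flag as ``expected'' do go through: in the identity $[x_2,v]=[x_1,w]$ (in the free associative algebra) a monomial $x_2x_px_q$ with $q\notin\{1,2\}$ can occur only in $x_2v$, since monomials of $vx_2$ end in $x_2$, those of $x_1w$ begin with $x_1$, and those of $wx_1$ end in $x_1$; this kills every coefficient of $v$ outside $\langle[x_1,x_2]\rangle$, and the residual case is excluded because $[[x_2,x_1],x_1]$ and $[[x_2,x_1],x_2]$ are independent basic commutators. What your route buys: it is self-contained where the paper hand-waves, it proves the stronger quantitative fact that $\dim\mathcal{M}(H)\leq l_d(3)$ for \emph{every} $d$-generator Lie algebra of class two with equality exactly in the extremal case, and it recovers Proposition~\ref{h3} as the special case $\dim V=1$ (there $\dim[W,V]=d$, giving $l_d(3)-d+1$). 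What the paper's route buys is brevity, at the cost of rigor. Two minor corrections: the formula $\mathcal{M}(H)\cong(R\cap F^2)/[F,R]$ is not Lemma~\ref{j1} (the exterior-square exact sequence) but the Hopf-formula definition of the multiplier used in the paper's references (Batten; Hardy--Stitzinger), so it should be cited as such; and your argument never uses the full hypothesis $Z(H)=H^2$, only nilpotency of class two together with $d(H)=d$, which is harmless since it proves a slightly more general statement.
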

\begin{proof}
Let $  \mathcal{M}(H)\cong A(l_d(3)).$ By a similar to the proof of Proposition \ref{h3}, we can see that if $\dim H^2<\frac{1}{2}d(d-1),$ then $\dim  \mathcal{M}(H)< l_d(3).$ Thus $\dim H^2=\frac{1}{2}d(d-1).$ The converse holds by  Proposition \ref{h2}.
\end{proof}
Recall that a pair of Lie algebras $(K, M)$ is said to be a defining pair for $L$ if  $ M\subseteq Z(K) \cap K^2 $ and $K/M\cong L. $ When $L $ is finite-dimensional then the dimension of $K$ is bounded.
If $(K, M)$ is a defining pair for $L,$ then a $K$ of maximal dimension
is called a cover for $L.$ Moreover, from \cite{ba,mon}, in this case $M\cong \mathcal{M}(L).$
\begin{thm}\label{h5}
Let $ H$ be a  $d$-generator generalized Heisenberg Lie algebra of rank $ \frac{1}{2}d(d-1).$  Then $ H^*$  is covering of $ H $ if and only if $ H^*$  is nilpotent of class $3,$  $ \dim (H^*)^3= \dim \mathcal{M}(H), $ $ Z(H^*) \subseteq (H^*)^2$
and $H^*/(H^*)^3\cong H.$\end{thm}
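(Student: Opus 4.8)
The plan is to show that \emph{every} cover of $H$ is forced to be the free nilpotent Lie algebra of class $3$ on $d$ generators, and that the four listed conditions describe exactly this algebra. Write $F$ for the free Lie algebra on $x_{1},\ldots,x_{d}$, so that the presentation in Proposition \ref{h1} identifies $H\cong F/F^{3}$ as the free class-$2$ nilpotent Lie algebra. I would begin by recording the relevant dimensions: Proposition \ref{h1} gives $\dim H=\f{1}{2}d(d+1)$, Proposition \ref{h2} gives $\dim\mathcal{M}(H)=l_{d}(3)$, and Theorem \ref{13} yields $\dim(F/F^{4})=l_{d}(1)+l_{d}(2)+l_{d}(3)=\f{1}{2}d(d+1)+l_{d}(3)$. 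Thus the common value $\dim H+\dim\mathcal{M}(H)=\dim(F/F^{4})$ is precisely the maximal dimension attainable by a term of a defining pair for $H$, which is the numerical backbone of both implications.

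For the forward direction, I would assume $H^{*}$ is a cover with defining pair $(H^{*},M)$ and $M\cong\mathcal{M}(H)$. Since $M\subseteq (H^{*})^{2}$, one gets $H^{*}/(H^{*})^{2}\cong H/H^{2}$, so by Proposition \ref{1h} the algebra $H^{*}$ is $d$-generated; I would then fix an epimorphism $\pi\colon F\to H^{*}$ whose composite $F\to H^{*}/M\cong H$ is the natural projection $F\to F/F^{3}$. Setting $S:=\ker\pi$, this forces $S\subseteq F^{3}$ and $M=F^{3}/S$. The containment $M\subseteq Z(H^{*})$ translates into $[F^{3},F]\subseteq S$, i.e.\ $F^{4}\subseteq S$, so $F^{3}/S$ is a quotient of $F^{3}/F^{4}$; comparing dimensions, $\dim(F^{3}/S)=\dim M=l_{d}(3)=\dim(F^{3}/F^{4})$ forces $S=F^{4}$ and hence $H^{*}\cong F/F^{4}$. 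From this identification the four conditions read off at once: $(H^{*})^{3}=F^{3}/F^{4}\neq 0$ with $(H^{*})^{4}=0$ gives class exactly $3$; $\dim(H^{*})^{3}=l_{d}(3)=\dim\mathcal{M}(H)$; $H^{*}/(H^{*})^{3}=F/F^{3}\cong H$; and a short center computation (no central element of $F/F^{4}$ can have nonzero image in $F/F^{2}$, since the length-two basic commutators are independent in $F^{2}/F^{3}$) shows $Z(H^{*})=(H^{*})^{3}\subseteq(H^{*})^{2}$.

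For the converse, I would suppose $H^{*}$ is nilpotent of class $3$ with $\dim(H^{*})^{3}=l_{d}(3)$, $Z(H^{*})\subseteq(H^{*})^{2}$, and $H^{*}/(H^{*})^{3}\cong H$, and take $M=(H^{*})^{3}$. Class $3$ gives $(H^{*})^{4}=0$, whence $M\subseteq Z(H^{*})$, and $M\subseteq(H^{*})^{2}$ is automatic; together with $H^{*}/M\cong H$ this makes $(H^{*},M)$ a defining pair. Finally $\dim H^{*}=\dim H+\dim M=\dim H+l_{d}(3)=\dim H+\dim\mathcal{M}(H)$ is the maximal dimension available to a defining pair, and $M$ is abelian of dimension $l_{d}(3)$, so $M\cong A(l_{d}(3))\cong\mathcal{M}(H)$; therefore $H^{*}$ is a cover of $H$.

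The substance of the argument is concentrated in the forward direction, and the main obstacle will be the identification $S=F^{4}$: this is the exact point where the two numerical inputs $\dim M=l_{d}(3)$ (Proposition \ref{h2}) and $\dim(F^{3}/F^{4})=l_{d}(3)$ (Theorem \ref{13}) must be matched against the structural input $F^{4}\subseteq S$ coming from the centrality of $M$. The only other delicate point is the computation $Z(F/F^{4})=F^{3}/F^{4}$ needed for the condition $Z(H^{*})\subseteq(H^{*})^{2}$; this relies on $d\geq 2$, which is guaranteed since a generalized Heisenberg algebra of positive rank is non-abelian.
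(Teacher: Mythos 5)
Your proof is correct, but it takes a genuinely different route from the paper. The paper argues intrinsically on $H^*$ by dimension counting: it gets $Z(H^*)\subseteq (H^*)^2$ for free from the generalized Heisenberg property $Z(H)=H^2$ of the quotient $H^*/B\cong H$, rules out class $2$ by applying Lemma \ref{kk} to get the contradiction $l_d(3)=\dim B\leq \dim (H^*)^2\leq \frac{1}{2}d(d-1)=l_d(2)$, and then applies Lemma \ref{kk} once more to $H^*/(H^*)^3$ to force $B=(H^*)^3$; it never identifies $H^*$ explicitly. Your argument instead works through presentations: identifying $H\cong F/F^3$ via Proposition \ref{h1}, lifting to an epimorphism $\pi\colon F\to H^*$ with $F^4\subseteq \ker\pi\subseteq F^3$, and matching $\dim M=l_d(3)$ (Proposition \ref{h2}) against $\dim(F^3/F^4)=l_d(3)$ (Theorem \ref{13}) to conclude $\ker\pi=F^4$. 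This buys strictly more than the theorem asserts: it shows the cover is unique up to isomorphism, namely the free nilpotent Lie algebra $F/F^4$ of class $3$ on $d$ generators, giving an explicit model. The price is two extra verifications that the paper's route avoids: you must compute that $Z(F/F^4)\subseteq F^2/F^4$ (your parenthetical argument via independence of the length-two basic commutators in $F^2/F^3$ does establish this containment, which is all the theorem needs, though it does not by itself prove your stronger claim $Z(H^*)=(H^*)^3$), and you should note explicitly that $H^*$ is nilpotent (immediate, since $(H^*)^3\subseteq M\subseteq Z(H^*)$ because $H^*/M\cong H$ has class $2$) before invoking Proposition \ref{1h} for $d$-generation and before the Frattini-type argument that makes your lifted map $\pi$ surjective. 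Your treatment of the converse coincides with the paper's (which dismisses it as clear): take $M=(H^*)^3$, check it is a central ideal contained in $(H^*)^2$ with abelian quotient data matching $\mathcal{M}(H)$, and conclude maximality of $\dim H^*=\dim H+\dim\mathcal{M}(H)$.
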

\begin{proof}
Let $ H^*$ be a covering Lie algebra of $ H. $ Then there exists an ideal $ B $ of  $ H^*$ such that $ B\cong \mathcal{M}(H), $  $ B\subseteq Z(H^*) \cap (H^*)^2 $ and $ H^*/B\cong H. $ Since $ Z(H^*/B)=(H^*/B)^2=(H^*)^2/B, $ we have $ Z(H^*) \subseteq (H^*)^2.  $ We claim that $ H^*$  is nilpotent of class $3.$ Clearly $ H^* $ is not abelian since $ H $ is non-abelian. By contrary, let $ H^*$  is nilpotent of class $2.$  Since $ B\subseteq (H^*)^2 =Z(H^*), $ we have  \[ d=\dim H/Z(H)=\dim H/H^2=\]\[\dim (H^*/B)/((H^*/B)^2)= \dim (H^*/(H^*)^2)=\dim (H^*/Z(H^*)).\]By Lemma \ref{kk}, we have $ \dim  (H^*)^2\leq \frac{1}{2}d(d-1).$ Now  Proposition \ref{h2} shows $\dim B=\dim \mathcal{M}(H)=\l_d(3)\leq \dim (H^*)^2 \leq \frac{1}{2}d(d-1)= \l_d(2). $ It is a contradiction. Thus $ H^*$  is nilpotent of class $3$ and so $ (H^*)^3\subseteq B $ and $ \dim (H^*)^2-\dim B= \dim H^2. $ Since $ H^*/(H^*)^3 $ is of class $ 2 $ and $\dim (H^*/(H^*)^3 )^{ab}=d,  $ we have $ \dim ((H^*)^2/(H^*)^3 ) \leq \frac{1}{2}d(d-1),$ by Lemma \ref{kk}. Therefore 
\begin{align*}
&\frac{1}{2}d(d-1)+\dim B-\dim (H^*)^3=\dim H^2+\dim B-\dim (H^*)^3=\\&\dim (H^*)^2-\dim(H^*)^3  =\dim ((H^*)^2/(H^*)^3 ) \leq \frac{1}{2}d(d-1).
\end{align*}
Hence $ \dim B \leq \dim (H^*)^3 $ and $ (H^*)^3\subseteq B. $
Thus $ B=(H^*)^3. $ The converse is clear.
\end{proof}

We are going to characterize the structure of the Schur multiplier and tensor square of
a Lie algebra $ L $ of class two such that   $ \dim (L/Z(L))= d$ and $ \dim L^2= \frac{1}{2}d(d-1).$ 

\begin{thm}\label{h4}
 Let $L$ be an $n$-dimensional Lie algebra of class two such that   $ \dim (L/Z(L))= d$ and $ \dim L^2= \frac{1}{2}d(d-1).$ Then  $ L=H\oplus A(t) $ for some $ t\geq 0 $ such that  $H\cong \langle x_1,\ldots,x_d,y_{i_j}|[x_i,x_j]=y_{i_j},1\leq i<j\leq d\rangle,$  $Z(H)=H^2=L^2\cong A(\frac{1}{2}d(d-1)),$ $ n=\frac{1}{2}d(d+1)+t, $ and $ \mathcal{M}(L)\cong A(l_d(3)+l_t(2)+dt). $ 
\end{thm}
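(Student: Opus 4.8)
The statement to prove is Theorem~\ref{h4}: for an $n$-dimensional Lie algebra $L$ of class two with $\dim(L/Z(L))=d$ and $\dim L^2=\frac{1}{2}d(d-1)$, one has $L=H\oplus A(t)$ with $H$ the generalized Heisenberg algebra of Proposition~\ref{h1}, together with the asserted centre, dimension count, and Schur multiplier formula. My approach is to reduce the whole problem to the already-established results on generalized Heisenberg algebras via the class-two decomposition, then to assemble the multiplier from the direct-sum formula.

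\begin{proof}
First I would invoke Proposition~\ref{811}: since $L$ is a finite-dimensional nilpotent Lie algebra of class two, it decomposes as $L=H\oplus A$ where $A$ is abelian and $H$ is a generalized Heisenberg Lie algebra. Writing $A=A(t)$, note that $L^2=H^2$ and $Z(L)=Z(H)\oplus A(t)$, so that $\dim(L/Z(L))=\dim(H/Z(H))$ and $\dim L^2=\dim H^2$; hence $H$ inherits $\dim(H/Z(H))=d$ and $\dim H^2=\frac{1}{2}d(d-1)$. By Corollary~\ref{hh} this forces $d(H)=d$, so $H$ is precisely a $d$-generator generalized Heisenberg Lie algebra of rank $\frac{1}{2}d(d-1)$.

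Next I would apply Proposition~\ref{h1} to $H$: it gives the presentation $H\cong\langle x_1,\ldots,x_d,y_{i_j}\mid[x_i,x_j]=y_{i_j},1\le i<j\le d\rangle$ together with $\dim H=\frac{1}{2}d(d+1)$. Since $H$ is generalized Heisenberg, $Z(H)=H^2$, and combined with the previous paragraph we get $Z(H)=H^2=L^2\cong A(\frac{1}{2}d(d-1))$. The total dimension is then $n=\dim H+\dim A(t)=\frac{1}{2}d(d+1)+t$, which also pins down $t=n-\frac{1}{2}d(d+1)\ge 0$, establishing every structural claim except the multiplier.

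For the Schur multiplier I would use Theorem~\ref{1kg} applied to the decomposition $L=H\oplus A(t)$, giving
\[
\mathcal{M}(L)\cong\mathcal{M}(H)\oplus\mathcal{M}(A(t))\oplus\bigl(H^{(ab)}\otimes_{mod}A(t)^{(ab)}\bigr).
\]
Here $\mathcal{M}(H)\cong A(l_d(3))$ by Proposition~\ref{h2}, and $\mathcal{M}(A(t))\cong A(l_t(2))$ by Lemma~\ref{2}$(i)$, since $l_t(2)=\frac{1}{2}t(t-1)$. Finally, $A(t)$ is abelian so $A(t)^{(ab)}=A(t)$, while $H^{(ab)}=H/H^2$ has dimension $d$; the standard tensor product of a $d$-dimensional and a $t$-dimensional abelian Lie algebra is abelian of dimension $dt$. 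Adding the three summands yields $\mathcal{M}(L)\cong A(l_d(3)+l_t(2)+dt)$, as required.
\end{proof}

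\noindent\textbf{Main obstacle.}
The only genuinely delicate point is justifying that the hypotheses on $L$ transfer cleanly to the Heisenberg summand $H$ and that no abelian part is hidden inside $H$: one must check that $\dim(H/Z(H))$ equals $d$ rather than something smaller, which is exactly what Corollary~\ref{hh} secures once we know $\dim H^2=\frac{1}{2}d(d-1)$ is extremal. Everything else is a bookkeeping assembly from Theorem~\ref{1kg}, Proposition~\ref{h2}, and Lemma~\ref{2}, with the elementary identities $l_t(2)=\frac{1}{2}t(t-1)$ and $\dim(H^{(ab)}\otimes_{mod}A(t))=dt$.
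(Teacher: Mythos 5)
Your proposal is correct and follows essentially the same route as the paper: decompose $L=H\oplus A(t)$ via Proposition~\ref{811}, identify $H$ via Proposition~\ref{h1}, and assemble $\mathcal{M}(L)$ from Theorem~\ref{1kg}, Proposition~\ref{h2}, and Lemma~\ref{2}$(i)$. In fact you supply a detail the paper leaves implicit, namely the verification (via Corollary~\ref{hh}) that the hypotheses on $L$ transfer to the summand $H$, making it genuinely a $d$-generator generalized Heisenberg algebra of rank $\frac{1}{2}d(d-1)$.
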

\begin{proof}
Propositions \ref{811} and  \ref{h1} imply  $ L=H\oplus A(t), $ where  $H\cong \langle x_1,\ldots,x_d,y_{i_j}$  $|[x_i,x_j]=y_{i_j},1\leq i<j\leq d\rangle$ and  $Z(H)=H^2=L^2\cong A(\frac{1}{2}d(d-1)) $ for some $ t\geq 0. $
By using Lemma \ref{2} $ (i), $ Theorem \ref{1kg} and Proposition \ref{h2}, we have \begin{align*}
\dim \mathcal{M}(L)&=\dim \mathcal{M}(H)+\dim \mathcal{M}(A(t))+\dim ((H )^{(ab)} \otimes_{mod} A(t))=\\&l_d(3)+\dfrac{1}{2}t(t-1)+dt=l_d(3)+l_t(2)+dt.
\end{align*}
 Hence the result follows.
\end{proof}
\begin{cor}\label{fg}
Let $L$ be an $n$-dimensional Lie algebra of class two such that   $ \dim (L/Z(L))= d$ and $ \dim L^2= \frac{1}{2}d(d-1).$ Then $ L\wedge L\cong \mathcal{M}(L)\oplus L^2\cong A(l_d(3)+l_t(2)+dt+l_d(2)) $ such that $ n=\frac{1}{2}d(d+1)+t $ for some $ t\geq 0. $
\end{cor}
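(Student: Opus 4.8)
The plan is to recognize $L\wedge L$ as the total space of the extension furnished by Lemma \ref{j1}, and then to show that every term involved is abelian, so that the extension collapses to a direct sum of abelian Lie algebras whose dimension is read off from Theorem \ref{h4}.

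First I would apply Lemma \ref{j1} to $L$, producing the short exact sequence
\[ 0\rightarrow \mathcal{M}(L)\rightarrow L\wedge L \xrightarrow{\kappa'} L^2\rightarrow 0. \]
Since $L$ has class two we have $L^2\subseteq Z(L)$, so $L^2$ is abelian; and $\mathcal{M}(L)$, being a kernel of the commutator map sitting in the center of $L\wedge L$, is abelian as well.

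The decisive step is to verify that $L\wedge L$ itself is abelian. Here I would use the crossed-module (Peiffer) structure attached to $\kappa'$ as in \cite{ellis,ni5}: for $\alpha,\beta\in L\wedge L$ one has $[\alpha,\beta]=\kappa'(\alpha)\cdot\beta$, where $L$ acts on $L\wedge L$ by $z\cdot(c\wedge d)=[z,c]\wedge d+c\wedge[z,d]$. Because $\kappa'(\alpha)\in L^2\subseteq Z(L)$, the brackets $[\kappa'(\alpha),c]$ and $[\kappa'(\alpha),d]$ vanish, so $\kappa'(\alpha)\cdot\beta=0$ and hence $[\alpha,\beta]=0$. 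Thus $(L\wedge L)^2=0$ and $L\wedge L$ is abelian. A quicker but weaker observation, that $\kappa'$ is a homomorphism and $(L^2)^2=0$, only yields $(L\wedge L)^2\subseteq\mathcal{M}(L)$; the triviality of the action is what forces the bracket to vanish completely, and this will be the main obstacle in the argument.

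Once $L\wedge L$ is abelian, the displayed sequence splits—a surjection of vector spaces admits a section, and for abelian Lie algebras a vector-space splitting is automatically a Lie-algebra splitting—so $L\wedge L\cong \mathcal{M}(L)\oplus L^2$. To finish, I would insert the values from Theorem \ref{h4}: $\mathcal{M}(L)\cong A(l_d(3)+l_t(2)+dt)$ with $n=\frac{1}{2}d(d+1)+t$ for some $t\geq 0$, while $L^2\cong A\big(\frac{1}{2}d(d-1)\big)=A(l_d(2))$ since $l_d(2)=\frac{1}{2}d(d-1)$. Being a direct sum of abelian algebras, $L\wedge L\cong A\big(l_d(3)+l_t(2)+dt+l_d(2)\big)$, as claimed; everything after the abelianness of $L\wedge L$ is bookkeeping with the identity $l_d(2)=\frac{1}{2}d(d-1)$.
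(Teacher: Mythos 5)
Your proof is correct and takes essentially the same route as the paper: both arguments establish $(L\wedge L)^2=0$ from the class-two hypothesis $L^2\subseteq Z(L)$ (the paper by expanding $[l_1\wedge l_2,l_3\wedge l_4]$ through the defining relations of the exterior square, you through the equivalent crossed-module/Peiffer identity $[\alpha,\beta]=\kappa'(\alpha)\cdot\beta$), and then both split the exact sequence of Lemma \ref{j1} and read off the dimensions from Theorem \ref{h4} together with $l_d(2)=\frac{1}{2}d(d-1)$.
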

\begin{proof}
Theorem \ref{h4} implies that 
 $L\cong \langle x_1,\ldots,x_d,y_{i_j}|[x_i,x_j]=y_{i_j},1\leq i<j\leq d\rangle \oplus A(t) $ for some $ t\geq 0. $ Since \[[l_1\wedge l_2, l_3\wedge l_4]=[l_1, l_2]\wedge [l_4,l_3] =l_1\wedge [l_2, [l_4, l_3]]-l_2\wedge [l_1, [l_4, l_3]]=0,\] for all $ l_1,l_2, l_3, l_4 \in L,$ we have $ (L\wedge L)^2=0. $ Thus $ L\wedge L\cong \mathcal{M}(L)\oplus L^2\cong A(l_d(3)+l_t(2)+dt)\oplus A(\frac{1}{2}d(d-1)) \cong A(l_d(3)+l_t(2)+dt+l_d(2)) $ such that $ n=\frac{1}{2}d(d+1)+t $ for some $ t\geq 0, $ by using Lemma \ref{j1} and Theorem \ref{h4}.
\end{proof}
Recall that $L\square L\cong \langle l\otimes l~|~ l\in L\rangle.$ Using \cite[Theorem 2.5]{ni5}, we have
\begin{cor}
Let $L$ be an $n$-dimensional Lie algebra of class two such that   $ \dim (L/Z(L))= d$ and $ \dim L^2= \frac{1}{2}d(d-1).$
Then \[ L\otimes L\cong \mathcal{M}(L)\oplus L^2\oplus L\square L \cong   \mathcal{M}(L)\oplus L^2\oplus (L/L^2\square L/L^2)\]\[\cong A(l_d(3)+l_t(2)+dt+l_d(2)+\frac{1}{2}(d+t)(d+t+1))\] for some $ t\geq 0. $
\end{cor}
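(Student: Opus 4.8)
The plan is to obtain the asserted isomorphism by combining the structural description already proved with the general decomposition of the tensor square recorded in \cite[Theorem 2.5]{ni5}. First I would appeal to Theorem \ref{h4} to write $L\cong H\oplus A(t)$, where $H\cong \langle x_1,\ldots,x_d,y_{i_j}\mid [x_i,x_j]=y_{i_j},\,1\leq i<j\leq d\rangle$ and $L^2=H^2\cong A(\frac{1}{2}d(d-1))$ for some $t\geq 0$. Since $\dim L=\frac{1}{2}d(d+1)+t$, this gives $\dim L/L^2=d+t$, so that $L^{(ab)}=L/L^2\cong A(d+t)$ is abelian.

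Next I would invoke \cite[Theorem 2.5]{ni5}, which supplies both the direct-sum decomposition $L\otimes L\cong (L\wedge L)\oplus (L\square L)$ and the identification $L\square L\cong (L/L^2)\square (L/L^2)$. Feeding in Lemma \ref{j1}, which gives $L\wedge L\cong \mathcal{M}(L)\oplus L^2$, at once yields the first two isomorphisms of the statement, namely $L\otimes L\cong \mathcal{M}(L)\oplus L^2\oplus (L\square L)\cong \mathcal{M}(L)\oplus L^2\oplus (L/L^2\square L/L^2)$.

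It then remains to identify the dimension of each summand and to observe that every summand is abelian, so that the total is an abelian Lie algebra $A(\cdot)$. Because $L/L^2\cong A(d+t)$ carries the trivial bracket, the symmetric product $(L/L^2)\square (L/L^2)$ is the ordinary symmetric square $\mathrm{Sym}^2$ of a $(d+t)$-dimensional vector space, hence abelian of dimension $\frac{1}{2}(d+t)(d+t+1)$. By Corollary \ref{fg} the remaining part satisfies $\mathcal{M}(L)\oplus L^2\cong A(l_d(3)+l_t(2)+dt+l_d(2))$. Adding the two dimensions, and using that a finite direct sum of abelian Lie algebras is abelian, produces $L\otimes L\cong A\bigl(l_d(3)+l_t(2)+dt+l_d(2)+\frac{1}{2}(d+t)(d+t+1)\bigr)$, which is the claimed formula.

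The only genuine input beyond this bookkeeping is the isomorphism $L\square L\cong (L/L^2)\square (L/L^2)$, and I expect this to be the main point to verify carefully: it is exactly where the class-two hypothesis enters, forcing the symmetric generators $l\otimes l$ to depend only on the image of $l$ in the abelianization $L/L^2$. Granting that via \cite[Theorem 2.5]{ni5}, the evaluation of the symmetric square of an abelian Lie algebra as $\mathrm{Sym}^2$ and the final dimension count are entirely routine.
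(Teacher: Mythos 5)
Your proposal is correct and follows essentially the same route as the paper: both rest on the decomposition $L\otimes L\cong (L\wedge L)\oplus(L\square L)$ with $L\square L\cong (L/L^2)\square(L/L^2)$ from \cite[Theorem 2.5]{ni5}, use Corollary \ref{fg} to identify $L\wedge L\cong \mathcal{M}(L)\oplus L^2\cong A(l_d(3)+l_t(2)+dt+l_d(2))$, and finish by adding dimensions. The only cosmetic difference is that you compute $(L/L^2)\square(L/L^2)$ directly as the symmetric square of a $(d+t)$-dimensional vector space, whereas the paper cites \cite[Lemmas 2.2 and 2.3]{ni5} for the same fact.
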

\begin{proof}
By Corollary \ref{fg}, \cite[Lemmas 2.2 and 2.3]{ni5}, we have $L/L^2\square L/L^2\cong A(\frac{1}{2}(d+t)(d+t+1)) $ and so \[L\otimes L \cong A(l_d(3)+l_t(2)+dt)\oplus A(\frac{1}{2}d(d-1))\oplus  A(\frac{1}{2}(d+t)(d+t+1))\]\[\cong A(l_d(3)+l_t(2)+dt+l_d(2)+\frac{1}{2}(d+t)(d+t+1)),  \] as required.
\end{proof}

\begin{cor}
Let $L$ be an $n$-dimensional Lie algebra of class two such that   $ \dim (L/Z(L))= d$ and $ \dim L^2= \frac{1}{2}d(d-1).$ Then $ L $ is capable.
\end{cor}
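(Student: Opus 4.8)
The plan is to assemble this as an immediate corollary of the structural results already established, using the exterior-centre characterization of capability recorded in the preliminaries: a Lie algebra is capable if and only if its exterior centre vanishes. The whole argument reduces capability of $L$ to capability of its generalized Heisenberg part, which has already been settled.

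First I would invoke Theorem \ref{h4} to write $L = H \oplus A(t)$ for some $t \geq 0$, where $H$ is precisely a $d$-generator generalized Heisenberg Lie algebra of rank $\frac{1}{2}d(d-1)$ (with the explicit presentation $\langle x_1,\ldots,x_d,y_{i_j}\mid [x_i,x_j]=y_{i_j},\ 1\leq i<j\leq d\rangle$) and $A(t)$ is abelian. This identifies the generalized Heisenberg summand of $L$ with exactly the class of algebras treated in Theorem \ref{h6}.

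Next I would apply Proposition \ref{811}, which asserts that for a finite-dimensional nilpotent Lie algebra of class two with decomposition $L = H \oplus A$ into a generalized Heisenberg algebra and an abelian algebra, one has $Z^{\wedge}(L) = Z^{\wedge}(H)$. Thus the exterior centre of $L$ coincides with that of $H$, and the abelian direct summand $A(t)$ contributes nothing to capability. Finally, Theorem \ref{h6} tells us that $H$ is capable, equivalently $Z^{\wedge}(H) = 0$; combining this with the previous equality gives $Z^{\wedge}(L) = Z^{\wedge}(H) = 0$, so $L$ is capable by the criterion $Z^{\wedge}(L)=0 \iff L$ capable.

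There is essentially no technical obstacle here, since each ingredient is supplied verbatim by an earlier result; the only point requiring care is to confirm that the generalized Heisenberg summand produced by Theorem \ref{h4} is genuinely the rank-$\frac{1}{2}d(d-1)$ algebra on $d$ generators to which Theorem \ref{h6} applies, rather than some algebra of smaller derived dimension. That compatibility is already guaranteed by the hypotheses $\dim(L/Z(L)) = d$ and $\dim L^2 = \frac{1}{2}d(d-1)$ feeding into Theorem \ref{h4}, so the conclusion follows directly.

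\begin{proof}
By Theorem \ref{h4}, we may write $L = H \oplus A(t)$ for some $t \geq 0$, where $H$ is a $d$-generator generalized Heisenberg Lie algebra of rank $\frac{1}{2}d(d-1)$ and $A(t)$ is abelian. By Proposition \ref{811}, $Z^{\wedge}(L) = Z^{\wedge}(H)$. Theorem \ref{h6} shows that $H$ is capable, so $Z^{\wedge}(H) = 0$. Hence $Z^{\wedge}(L) = 0$, and therefore $L$ is capable.
\end{proof}
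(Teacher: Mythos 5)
Your proof is correct and follows exactly the paper's own route: the paper's proof is the one-line statement ``The result follows from Proposition \ref{811}, Theorems \ref{h6} and \ref{h4},'' which are precisely the three ingredients you combine. Your write-up merely makes explicit the intermediate step $Z^{\wedge}(L)=Z^{\wedge}(H)=0$ via the exterior-centre criterion, which is a faithful (and slightly more detailed) rendering of the intended argument.
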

\begin{proof}
The result follows from Proposition \ref{811}, Theorems \ref{h6} and \ref{h4}.
\end{proof}

\end{document}